\renewcommand{\eqref}[1]{(\ref{#1})}
\newtheorem{prop}{Proposition}[section]
\newtheorem{lem}[prop]{Lemma}
\newtheorem{thm}{Theorem}[section]
\newtheorem{cor}[prop]{Corollary}
\begin{document}

\title[Peudo-Anosov maps and simple closed geodesics]{Pseudo-Anosov maps and pairs of filling simple closed geodesics on Riemann surfaces  }
\author[C. Zhang]{C. Zhang}
\date{April 26, 2011}
\thanks{ }

\address{Department of Mathematics \\ Morehouse College
\\ Atlanta, GA 30314, USA.}
\email{czhang@morehouse.edu}

\subjclass{Primary 32G15; Secondary 30C60, 30F60}
\keywords{Riemann surfaces, pseudo-Anosov maps, Dehn twists,
 simple closed geodesics, filling closed geodesics.}

\maketitle 

\begin{abstract}
Let $S$ be a Riemann surface with a puncture $x$. Let $a\subset S$ be a simple closed geodesic. 
In this paper, we show that for any pseudo-Anosov map $f$ of $S$ that is isotopic to the identity on $S\cup \{x\}$, $(a, f^m(a))$ fills $S$ for $m\geq 3$. We also study the cases of $0<m\leq 2$ and show that if $(a,f^2(a))$ does not fill $S$, then there is only one geodesic $b$ such that $b$ is disjoint from both $a$ and $f^2(a)$. In fact, $b=f(a)$ and $\{a,f(a)\}$ forms the boundary of an $x$-punctured cylinder on $S$. As a consequence, we show that if $a$ and $f(a)$ are not disjoint. Then $(a,f^m(a))$ for any $m\geq 2$ fills $S$. 
\end{abstract}

\section{Introduction}
\setcounter{equation}{0} 

In an important paper \cite{Th}, Thurston proved that there exist pseudo-Anosov maps on a hyperbolic surface $S$ that are obtained from products of Dehn twists along two simple closed geodesics. More specifically, let $a,b$ be simple closed geodesics. The pair $(a,b)$ is called to fill $S$ if the union $a\cup b$ intersects every non-trivial closed curve, which is equivalent to that every component of $S\backslash \{a,b\}$ is a disk or an once punctured disk. Let $t_a, t_b$ denote the positive Dehn twists along $a$ and $b$, respectively. By \cite{Th} we know that $t_{a}\circ t_b^{-1}$ represents a pseudo-Anosov mapping class whenever $(a,b)$ fills $S$. See \cite{Th} and \cite{FLP} for a detailed account of pseudo-Anosov maps.  

Let $S$ be an analytically finite Riemann surface with type $(p,n)$, where $p$ is the genus and $n$ is the number of punctures of $S$. Assume that $3p-3+n>0$. Let $f:S\rightarrow S$ be a pseudo-Anosov map and let $a\subset S$ be a simple closed geodesic. Denote by $f^m(a)$ the geodesic homotopic to the image curve of $a$ under the map $f^m$. It is well-known \cite{FLP} that 
$$
\mathscr{S}=\{f^m(a):\ m\in \mathbf{Z}\}
$$ 
fills $S$ (in the sense that $S\backslash \mathscr{S}$ consists of disks or once punctured disks). Later, Fathi \cite{Fa} showed that a finite subset of $\mathscr{S}$ fills $S$. It is natural to ask if any pair of elements of $\mathscr{S}$ also fills $S$. Unfortunately, the answer to this question is ``no". In fact,  Wang--Wu--Zhou \cite{W-W-Z} showed that for any two non-separating non-isotopic simple closed geodesics $a,b$ on $S$, there is a pseudo-Anosov map $f$ such that $f(a)=b$. 

By contrast, in \cite{M-M}, Masur--Minsky showed that there is an integer $K$ such that  $(a,f^m(a))$ fills $S$ for all integers $m\geq K$. To determine the smallest possible integer $m$ with this property, Farb--Leininger--Margalit \cite{FLM} considered the curve complex $\mathscr{C}=\mathscr{C}(S)$ on $S$ that is equipped with the path metric $d_{\mathscr{C}}$, then they introduced the asymptotic translation length $\mu$ on $\mathscr{C}$ defined by $\mu=\lim_{m\rightarrow \infty} \inf d_{\mathscr{C}}(a,f^m(a))/m$, which is independent of choice of $a$, and showed that if $m$ is the smallest integer so that $m \mu > 2$, then $(a,f^{m}(a))$ fills $S$.

In this paper, we study the similar problem on a surface $S$ that contains at least one puncture $x$. Set $\tilde{S}=S\cup \{x\}$. Let $\mathscr{F}$ be the set of pseudo-Anosov maps of $S$ that are isotopic to the identity on $\tilde{S}$. Kra \cite{Kr} proved that $\mathscr{F}$ is non-empty and contains infinitely many elements. Let $f\in \mathscr{F}$ and let $F:[0,1]\times \tilde{S}\rightarrow \tilde{S}$ denote the isotopy between $f$ and the identity as $x$ is filled in. Then $\tilde{c}=F(t,x)$, $t\in [0,1]$, is a filling closed curve passing through $x$ in the sense that $\tilde{c}$ intersects every simple closed geodesic. 

For a simple closed geodesic $a$, we denote by $\tilde{a}$ the simple closed geodesic on $\tilde{S}$ homotopic to $a$ when $a$ is viewed as a curve on $\tilde{S}$. Let $K$ be the smallest integer such that $(a,f^m(a))$ fills $S$ whenever $m\geq K$. 
\begin{thm}\label{T1}
For an element $f\in \mathscr{F}$ and a simple closed geodesic $a\subset S$, we have $K\leq 3$. If $\tilde{a}$ is non-trivial and intersects $\tilde{c}$ more than once, then $K\leq 2$. If $\tilde{a}$ is trivial, then $K=1$. 
\end{thm}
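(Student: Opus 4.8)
The plan is to use that, by Kra's construction, every $f\in\mathscr{F}$ is the point-push of the puncture $x$ once around the filling loop $\tilde c$; in particular $f$ is isotopic to the identity on $\tilde S$, so $f^m(a)$ is freely homotopic to $\tilde a$ on $\tilde S$ for every $m$. Thus $a,f(a),f^2(a),\dots$ are all lifts to $S$ of the single geodesic $\tilde a$, differing only in how they wind about $x$. To prove that $(a,f^m(a))$ fills $S$ it is enough to show that no essential, non-peripheral simple closed geodesic $b$ is disjoint from both $a$ and $f^m(a)$. The first reduction I would record is that any such $b$ projects to a geodesic $\tilde b$ with $i(\tilde a,\tilde b)=0$ on $\tilde S$: disjoint simple closed curves on $S$ stay disjoint after filling in $x$, and since $a$ and $f^m(a)$ have the common image $\tilde a$, the two disjointness conditions collapse to one upstairs. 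Consequently the whole obstruction to filling is carried by the way $b$ is routed about the puncture relative to the two lifts $a$ and $f^m(a)$, and the problem becomes one of counting windings around $x$.

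The engine is a dichotomy for consecutive images governed by the integer $r=i(\tilde a,\tilde c)$. Because $\tilde c$ meets every simple closed geodesic, $r\ge 1$ whenever $\tilde a$ is non-trivial. Cutting $\tilde S$ along $\tilde a$ and following the puncture as it is pushed once along $\tilde c$, I would compare $a$ with $f(a)$ as two copies of $\tilde a$ separated by $x$, and count the crossings the push forces between them at each passage of $\tilde c$ across $\tilde a$. The claim I aim to prove is the following: if $r\ge 2$ then $i(a,f(a))>0$, whereas if $r=1$ the curves $a$ and $f(a)$ may be realized disjointly and in fact cobound, together with $x$, a once-punctured cylinder (a pair of pants on $S$ with boundary $a\cup f(a)$ and the single puncture $x$ inside). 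Proving this dichotomy, and the rigidity that in the case $r=1$ there is no other way for $a$ and $f(a)$ to be disjoint, is the technical heart of the argument.

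Granting the dichotomy, the theorem follows by a short chaining analysis. Suppose $(a,f^2(a))$ fails to fill and let $b$ be a common disjoint geodesic. Pushing the configuration by $f$ shows that $f$ carries the once-punctured cylinder between $a$ and $f(a)$ to the one between $f(a)$ and $f^2(a)$; since $f$ fixes the puncture, \emph{both} pairs of pants contain the single point $x$. Tracking this, one shows the obstruction is forced to be $b=f(a)$ and is unique. This is exactly the non-filling scenario for $m=2$. The point of the bound $K\le3$ is that the chain cannot be lengthened: a common disjoint curve for $(a,f^3(a))$ would have to sit in three consecutive once-punctured cylinders all sharing the one available puncture $x$, and the resulting incompatibility—reinforced by the filling property of $\tilde c$—rules it out, so $(a,f^3(a))$ fills. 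When $r\ge 2$ the dichotomy gives $i(a,f(a))>0$, so the cylinder never forms and $(a,f^2(a))$ already fills, which is the statement $K\le 2$ under the hypothesis that $\tilde a$ meets $\tilde c$ more than once.

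It remains to treat the degenerate case in which $\tilde a$ is trivial, i.e. $a$ is inessential on $\tilde S$ and so bounds a disk meeting only the puncture $x$. Here $f(a)$ is the image of this puncture-enclosing curve under a push along the \emph{filling} loop $\tilde c$, and a direct examination of that image shows $a\cup f(a)$ already meets every essential geodesic, giving $K=1$. The main obstacle throughout is the crossing analysis above: pinning down the exact crossing pattern of the two lifts $a$ and $f(a)$ as a function of $r=i(\tilde a,\tilde c)$, and in particular establishing the rigid once-punctured cylinder in the borderline case $r=1$, since both the general bound $K\le3$ and its sharpening to $K\le2$ rest on it.
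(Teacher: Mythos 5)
Your outline has a genuine gap at its center: the ``chaining analysis'' is not an argument but a restatement of what must be proved. If $(a,f^m(a))$ fails to fill, all you get is \emph{some} simple closed geodesic $b$ disjoint from both $a$ and $f^m(a)$; nothing a priori ties $b$ to $f(a)$, to $f^2(a)$, or to any once-punctured cylinder. Your dichotomy concerning $i(a,f(a))$ as a function of $r=i(\tilde{a},\tilde{c})$, even granting it, constrains only the pair $(a,f(a))$ and gives no handle on an arbitrary third curve $b$. The assertions that ``the obstruction is forced to be $b=f(a)$ and is unique'' and that ``$i(a,f(a))>0$, so the cylinder never forms and $(a,f^2(a))$ already fills'' are precisely Theorem 1.2 and Corollary 1.1 of the paper, whose proofs occupy Section 5 and rest on the whole half-plane machinery; you cannot invoke them to prove Theorem 1.1. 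The same objection applies to $K\le 3$: you give no reason why a curve disjoint from $a$ and $f^3(a)$ ``would have to sit in three consecutive once-punctured cylinders.'' In addition, the dichotomy itself is not correct as stated: whether $a$ and $f(a)$ are disjoint is not determined by $i(\tilde{a},\tilde{c})$ alone but also by the position of the geodesic $a$ within its free homotopy class on $\tilde{S}$ relative to the push path --- in the paper's language, by whether the axis $\hat{c}$ meets $\Omega_{\hat{a}}$ and whether $\{g(U_0),U\}$ tessellates $\mathbf{D}$ (Lemma 5.2). When $i(\tilde{a},\tilde{c})=1$ but $\hat{c}$ lies inside a single maximal element of $\mathscr{U}_{\hat{a}}$, the pair $(a,f(a))$ already fills. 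Finally, your reduction ``the two disjointness conditions collapse to one upstairs'' silently discards the case where $\tilde{b}$ is trivial, i.e.\ $b$ bounds a twice-punctured disk about $x$; the paper must treat that case separately (via parabolic fixed points, in Lemma 3.3).

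For contrast: the paper does not attempt the direct combinatorial count of crossings at all. It lifts to the disk via the Bers isomorphism, realizes $t_{f^k(a)}$ as $\tau_k=g^k\tau_{\hat{a}}g^{-k}$, and shows by analyzing the action on the configuration of half-planes $\mathscr{U}_{\hat{a}}$, $\mathscr{U}_k$ (and using Bers's classification to exclude elliptic and reducible classes) that $[\tau_{\hat{a}}^r\tau_k^{-s}]^*=t_a^r\circ t_{f^k(a)}^{-s}$ is pseudo-Anosov for large $r\ne s$ once $k\ge 3$, or once $k\ge 2$ when $\tilde{a}$ meets $\tilde{c}$ more than once; Thurston's criterion then forces $(a,f^k(a))$ to fill. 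If you want to salvage your surface-level approach, the missing ingredient is a genuine classification of \emph{all} curves disjoint from both $a$ and $f^m(a)$, not just of the relative position of $a$ and $f(a)$.
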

It is well understood that every filling closed geodesic $\tilde{c}$ determines a conjugacy class of pseudo-Anosov maps on $S$ and one in the class, denoted by $f_{\tilde{c}}$, is constructed by pushing the point $x$ along $\tilde{c}$ until returning to its original position. 

Assume that $\tilde{c}$ is so chosen that there is an $x$-punctured cylinder $P\subset S$ such that $\tilde{c}$ goes through $P$ only once. Let $\partial P=\{a,a_0\}$ be the boundary components of $P$. In this case, the map $f_{\tilde{c}}$ satisfies the condition that $f_{\tilde{c}}(a)=a_0$. In particular, $f_{\tilde{c}}(a)$ and $a$ are disjoint. Since $f_{\tilde{c}}$ is a homeomorphism of $S$, $f_{\tilde{c}}(a)$ and $f_{\tilde{c}}^2(a)$ are also disjoint. We see that $f_{\tilde{c}}(a)$ is disjoint from both $a$ and $f_{\tilde{c}}^2(a)$. It follows that $(a,f_{\tilde{c}}^2(a))$ does not fill $S$. Our next result states that this is the only incidence for the pair $(a,f^2(a))$ not to fill $S$. More precisely, we
will prove the following result. 
\begin{thm}\label{T2}
Let $f\in \mathscr{F}$ and let  $a$ be a simple closed geodesic. Assume that $(a,f^2(a))$ does not fill $S$. Then there is a unique simple closed geodesic $b$ that is disjoint from both $a$ and $f^2(a)$. Furthermore, $b=f(a)$ and $\{a,b\}$ forms the boundary of an $x$-punctured cylinder on $S$. 
\end{thm}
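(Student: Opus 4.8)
The plan is to first convert the non-filling hypothesis into the sharp combinatorial statement that $\tilde{a}$ crosses $\tilde{c}$ exactly once, then to read off the punctured-cylinder structure from a single point-push, and finally to identify $b$ by a complementary-region analysis.

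First I would use Theorem \ref{T1} to pin down the geometry. Since $(a,f^2(a))$ does not fill, we have $K\geq 3$, while Theorem \ref{T1} gives $K\leq 3$; hence $K=3$. Reading Theorem \ref{T1} contrapositively, $K=3>1$ forces $\tilde{a}$ to be non-trivial, and $K=3>2$ forces $\tilde{a}$ to meet $\tilde{c}$ \emph{not} more than once. As $\tilde{c}$ is filling, a non-trivial $\tilde{a}$ must meet it at least once, so the geometric intersection number $i(\tilde{a},\tilde{c})$ equals exactly $1$. In particular $\tilde{a}$ is non-separating, since a separating simple closed curve meets every closed curve an even number of times.

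Second, and this is the heart of the argument, I would show that $a$ and $f(a)$ are disjoint on $S$ and cobound an $x$-punctured cylinder. Here I would use that $f$ is the point-push of $x$ along $\tilde{c}$, so that the isotopy $F_t$ realizes $a$ and $f(a)$ as freely homotopic curves on $\tilde{S}$ (both equal to $\tilde{a}$, since $f$ is isotopic to the identity there) whose only obstruction to being disjoint on $S$ is the trajectory $\tilde{c}$ of $x$. Because $\tilde{c}$ crosses $\tilde{a}$ transversely in a single point, pushing $x$ once along $\tilde{c}$ drags exactly one finger of $a$ around $\tilde{c}$, and after tightening to geodesics this single crossing is absorbed into an $x$-punctured bigon, giving $i_S(a,f(a))=0$. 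I expect \emph{this} step to be the main obstacle: making rigorous that a single transverse push produces a punctured bigon rather than essential intersections. I would carry it out by cutting $\tilde{S}$ along $\tilde{a}$ (legitimate since $\tilde{a}$ is non-separating), where $\tilde{c}$ becomes a single arc running from one copy of $\tilde{a}$ to the other and the image of $a$ under the corresponding relative point-push can be tracked directly; alternatively one may invoke the Birman exact sequence and the explicit description of point-pushing along a loop meeting $\tilde{a}$ once. Granting disjointness, the punctured-cylinder structure is then formal: $a$ and $f(a)$ are disjoint and freely homotopic on $\tilde{S}$, so they cobound an embedded annulus $A\subset\tilde{S}$; and since $f$ is pseudo-Anosov it has no periodic curves, so $a\neq f(a)$ as geodesics on $S$, whence $A$ cannot lie in $S$ and must contain the puncture $x$. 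Thus $A\setminus\{x\}$ is an $x$-punctured cylinder with boundary $\{a,f(a)\}$.

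Finally, set $b=f(a)$. Applying the homeomorphism $f$, which preserves geometric intersection numbers and fixes the puncture $x$, disjointness of $a$ and $f(a)$ yields disjointness of $f(a)$ and $f^2(a)$; hence $b=f(a)$ is disjoint from both $a$ and $f^2(a)$, giving existence, and $\{a,b\}$ bounds the punctured cylinder $P=A\setminus\{x\}$ as required. For uniqueness I would first note that $a$ and $f^2(a)$ must themselves intersect: the punctured cylinder $P$ between $a$ and $f(a)$ and its image $P'=f(P)$ between $f(a)$ and $f^2(a)$ both contain $x$ and share the boundary $f(a)$, which is incompatible with $a,f(a),f^2(a)$ being pairwise disjoint, since the point-push orders the images $f^m(a)$ monotonically as parallel copies of $\tilde{a}$ and then $P,P'$ would lie on opposite sides of $f(a)$, forcing the single point $x$ to lie on both. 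Consequently $a$ and $f^2(a)$ cross, $P$ and $P'$ lie on the same side of $f(a)$ and overlap, and the fingers contributed by $a$ and $f^2(a)$ cut $S\setminus(a\cup f^2(a))$ into disks and once-punctured disks except for a single essential annular region whose core is $f(a)$. Any simple closed geodesic disjoint from both $a$ and $f^2(a)$ must be isotopic into an essential complementary region, hence into this annulus, and therefore coincides with $f(a)$. This gives the uniqueness of $b$ and the identification $b=f(a)$.
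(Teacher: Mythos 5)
Your overall picture is right, but there is a genuine gap at exactly the step you flag as the heart of the argument. You reduce everything to the single invariant $i(\tilde{a},\tilde{c})=1$ on $\tilde{S}$, and then claim that a point-push along a loop crossing $\tilde{a}$ once produces one finger and hence $a\cap f(a)=\emptyset$. That invariant does not determine the answer: it forgets which of the infinitely many $S$-isotopy classes of curves projecting to $\tilde{a}$ the geodesic $a$ is, and which \emph{based} loop at $x$ (i.e., which element $g\in G$ with $g^*=f$) realizes the push. If $a'=\mathrm{Push}(\delta)(a)$ is another curve projecting to $\tilde{a}$, then $f(a')$ is disjoint from $a'$ iff $\mathrm{Push}(\delta^{-1}\gamma\delta)(a)$ is disjoint from $a$, and the conjugated based loop can cross $a$ many times even though its free homotopy class $\tilde{c}$ still meets $\tilde{a}$ once; in that configuration the push creates several fingers and essential intersections. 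This is precisely the dichotomy the paper encodes in the universal cover: disjointness of $a$ and $f(a)$ is equivalent to $\varepsilon_1=0$ (Lemma 5.2), which requires both $i(\tilde a,\tilde c)=1$ \emph{and} the positional condition $\hat{c}\cap\Omega_{\hat{a}}\neq\emptyset$ with $g(U_0)=\mathbf{D}\setminus\bar{U}$; when instead $\hat{c}$ lies inside a maximal element of $\mathscr{U}_{\hat{a}}$, Section 4 shows $(a,f(a))$ already fills $S$. So your step 2, as stated, proves something false, and the real work — deriving the correct positional data from the hypothesis that $(a,f^2(a))$ does not fill, via Lemmas 3.1--3.3 and 5.1--5.2 — is absent. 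Cutting along $\tilde{a}$ or invoking the Birman sequence does not repair this, because the loss of information happens before that point.

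The uniqueness argument has a second gap of the same flavor: you assert that $S\setminus(a\cup f^2(a))$ consists of disks and once-punctured disks except for a single essential annulus with core $f(a)$. That assertion \emph{is} the uniqueness statement, and nothing in the proposal establishes it; knowing that $a$ and $f^2(a)$ each cobound a punctured cylinder with $f(a)$ and that they intersect does not by itself control all complementary components. The paper spends the second half of its proof on exactly this point, using the commutation of $t_\gamma$ with $t_a^r\circ t_{f^2(a)}^{-s}$, transporting it through the Bers isomorphism, and showing that a maximal element $W\in\mathscr{U}_{\hat{\gamma}}$ must coincide with $g(U_0)$, whence $\tau_{\hat{\gamma}}=g\tau_{\hat{a}}g^{-1}$ and $\gamma=f(a)$. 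Your preliminary observations (that $K=3$ forces $i(\tilde a,\tilde c)=1$, that disjointness of $f(a)$ and $f^2(a)$ follows by applying $f$, and that the two punctured cylinders $P$ and $f(P)$ cannot lie on opposite sides of $f(a)$ since both contain $x$) are correct and worth keeping, but the two central claims remain unproven.
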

An immediate consequence of Theorem \ref{T2} is the following corollary.
\begin{cor}\label{C1}
Let $a$, $f$ be given as in Theorem $1.2$. Assume that $a$ and $f(a)$ are not disjoint. Then $(a,f^m(a))$ for any $m\geq 2$ fills $S$. 
\end{cor}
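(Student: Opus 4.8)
The plan is to obtain Corollary \ref{C1} as a direct consequence of Theorems \ref{T1} and \ref{T2}, treating the value $m=2$ separately from the range $m\geq 3$.

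For the case $m=2$ I would argue by contraposition using Theorem \ref{T2}. Assume, toward a contradiction, that $(a,f^2(a))$ does not fill $S$. Then the hypotheses of Theorem \ref{T2} are met, and its conclusion gives that $\{a,f(a)\}$ bounds an $x$-punctured cylinder $P\subset S$. Since the two boundary components of an embedded cylinder are disjoint simple closed curves, their geodesic representatives $a$ and $f(a)$ are disjoint as well. This contradicts the standing assumption that $a$ and $f(a)$ are not disjoint. Hence $(a,f^2(a))$ fills $S$.

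For the range $m\geq 3$ no extra work is needed: Theorem \ref{T1} asserts unconditionally that $K\leq 3$, and by the definition of $K$ as the least integer for which $(a,f^m(a))$ fills $S$ for all $m\geq K$, we conclude that $(a,f^m(a))$ fills $S$ whenever $m\geq 3$. Putting the two cases together yields the filling property for every $m\geq 2$, which is exactly the assertion of the corollary.

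The argument is short and rests entirely on the two preceding theorems, so I do not anticipate a genuine obstacle. The one point deserving a sentence of justification is that the boundary geodesics of an embedded $x$-punctured cylinder are disjoint; this is precisely the step at which the non-disjointness hypothesis on $a$ and $f(a)$ is converted into the contradiction needed to dispose of the $m=2$ case.
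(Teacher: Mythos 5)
Your proof is correct and follows essentially the same route as the paper: assume some pair fails to fill, invoke Theorem \ref{T2} to identify the disjoint curve as $f(a)$, and derive a contradiction with the hypothesis that $a$ and $f(a)$ intersect. If anything, your explicit split between $m=2$ (handled by Theorem \ref{T2}) and $m\geq 3$ (handled by $K\leq 3$ from Theorem \ref{T1}) is slightly more careful than the paper's proof, which applies Theorem \ref{T2} to an arbitrary $k_0\geq 2$ even though that theorem is stated only for the pair $(a,f^2(a))$.
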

This paper is organized as follows. In Section 2, we discuss properties of Dehn twists and their lifts to the universal covering space via the Bers isomorphism \cite{Bers1}. In Section 3, we study the actions of pseudo-Anosov maps in $\mathscr{F}$ on the set of simple closed geodesics on $S$. As an outcome, we obtain a family of pseudo-Anosov maps of $S$ determined by simple closed geodesics. In Section 4, we prove Theorem 1.1. In Section 5, we prove Theorem 1.2 and Corollary 1.1. Finally, we include some remarks and questions in Section 6. 
 
\section{Preliminaries}
\setcounter{equation}{0}

Let $\mathbf{D}=\{z\in \mathbf{C}:|z|<1\}$ be the hyperbolic disk with the Poincare metric $\rho(z)dz=dz/\left(1-|z|^2\right)$. Let $\varrho:\mathbf{D}\rightarrow \tilde{S}$ be the universal covering map with the covering group $G$. Let $\mbox{QC}(G)$ be the group of quasiconformal self-maps $w$ of $\mathbf{D}$ such that $wGw^{-1}=G$. Two maps $w,w'\in \mbox{QC}(G)$ are said equivalent if they share the common boundary values on $\partial \mathbf{D}=\mathbf{S}^1$. Denote by $[w]$ the equivalence class of an element $w\in \mbox{QC}(G)$. An important theorem of Bers \cite{Bers1} states that there is an isomorphism $\varphi^*$ of the quotient group  $\mbox{QC}(G)/\!\sim$ onto the $x$-pointed mapping class group Mod$_S^x$. In what follows, $\varphi^*:\mbox{QC}(G)/\! \sim \rightarrow \mbox{Mod}_S^x$ is called the Bers isomorphism. Under the Bers isomorphism, the image $\varphi^*(G)$ is a subgroup of Mod$_S^x$ consisting of elements that project to a trivial mapping class of $\tilde{S}=\mathbf{D}/G$. By abuse of language, we use $[w]^*$ to denote the mapping class $\varphi^*([w])$ as well as a representative of $\varphi^*([w])$ for an element $w\in \mbox{QC}(G)$. In particular, for an element $h\in G$, we use $h^*$ to denote the mapping class  $\varphi^*(h)$ as well as a representative of $\varphi^*(h)$. 

Let $\tilde{c}\subset \tilde{S}$ be a closed geodesic. Note that any geodesic $\hat{c}\subset \mathbf{D}$ with $\varrho(\hat{c})=\tilde{c}$ is an invariant geodesic under a hyperbolic element $g_{\hat{c}}\in G$. Following Kra \cite{Kr}, $g_{\hat{c}}$ is called an essential hyperbolic element if $g_{\hat{c}}$ corresponds to an element $g_{\hat{c}}^*=f_{\hat{c}}$ in $\mathscr{F}$. In this case, $\tilde{c}=\varrho(\hat{c})$ is a filling geodesic. 

Consider now some special elements in $\mbox{QC}(G)/\!\sim$. Let $a\subset S$ be a simple closed geodesic that is non-trivial on $\tilde{S}$ as $x$ is filled in. Let $\tilde{a}$ denote the (non-trivial) simple closed geodesic homotopic to $a$ on $\tilde{S}$. We can construct a Dehn twist along $\tilde{a}$ as follows. We first cut $\tilde{S}$ along $\tilde{a}$, rotate one of the copies of $\tilde{a}$ by 360 degrees in the counterclockwise direction and then glue the two copies back together.  

Let $\hat{a}\subset \mathbf{D}$ be a geodesic such that $\varrho(\hat{a})=\tilde{a}$. Denote by $\{U, U'\}$ the components of $\mathbf{D}\backslash \{\hat{a}\}$. It is readily seen that $\hat{a}$, $U$ and $U'$ are invariant by a primitive simple hyperbolic element of $G$. The Dehn twist $t_{\tilde{a}}$ can be lifted to a map $\tau_{\hat{a}}:\mathbf{D}\rightarrow \mathbf{D}$ with respect to $U$, which satisfies the conditions: 
$$
\mbox{(i)} \ \ \tau_{\hat{a}}G\tau_{\hat{a}}^{-1}=G \ \ \ \mbox{and}\ \ \  \mbox{(ii)} \ \ \varrho\circ \tau_{\hat{a}}=t_{\tilde{a}}\circ \varrho.
$$
In addition to (i) and (ii) above, $\tau_{\hat{a}}$ defines a collection $\mathscr{U}_{\hat{a}}$ of half planes of $\mathbf{D}$ in a partial order defined by inclusion, and all maximal elements $U_i$ ($U$ is one of them) of $\mathscr{U}_{\hat{a}}$ are mutually disjoint, and the complement 
\begin{equation}\label{LLMM}
\Omega_{\hat{a}}=\mathbf{D}\backslash \bigcup_i U_i\subset U'
\end{equation}
\noindent is not empty and is a convex region bounded by a collection of disjoint geodesics $\hat{a}$ with $\varrho(\hat{a})=\tilde{a}$. Clearly, $U'$ contains infinitely many maximal elements of $\mathscr{U}_{\hat{a}}$. The map $\tau_{\hat{a}}$ keeps each maximal element invariant, and restricts to the identity on $\Omega_{\hat{a}}$.  

We remark that $\tau_{\hat{a}}$ so obtained depends on the choice of a geodesic $\hat{a}$ with $\varrho(\hat{a})=\tilde{a}$, but
does not depend on the choice of a boundary component of $\Omega_{\hat{a}}$. Note also that all lifts of $t_{\tilde{a}}$ are in the forms $h\circ \tau_{\hat{a}}$ for $h\in G$. Furthermore, $\tau_{\hat{a}}$ determines an element $[\tau_{\hat{a}}]$ of QC$(G)/\! \sim$. By Lemma 3.2 of \cite{CZ1}, we can choose $\hat{a}$ (and hence $U$) so that $[\tau_{\hat{a}}]^*\in \mbox{Mod}_S^x$ is represented by the Dehn twist  $t_a$ along the geodesic $a$. See \cite{CZ1,CZ2} for more details. The following lemma, deduced from the definition of $\tau_{\hat{a}}$, plays an important role in this paper. 
\begin{lem}\label{A1}
Let $U\in \mathscr{U}_{\hat{a}}$ be a maximal element. Let $h\in G$ be a hyperbolic element whose axis  $c_h$ crosses the boundary $\partial U$ of $U$. Assume that $U$ covers the repelling fixed point of $h$. Then $h(\mathbf{D}\backslash U)$ is contained in another maximal element $U_0\in \mathscr{U}_{\hat{\alpha}}$. Moreover, $h(\mathbf{D}\backslash U)=U_0$ if and only if the geodesic $\varrho(c_h)$ intersects $\tilde{a}$ exactly once. 
\end{lem}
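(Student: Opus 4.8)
The plan is to work entirely in the disk $\mathbf{D}$ and to track two lifts of $\tilde a$, namely $\partial U$ and its image $h(\partial U)$, together with the axis $c_h$. First I would record the basic incidences. Since $U$ is a maximal element of $\mathscr{U}_{\hat a}$, its boundary $\partial U$ is one of the geodesics bounding $\Omega_{\hat a}$, hence a lift of $\tilde a$; and as $h\in G$ the geodesic $h(\partial U)$ is again a lift of $\tilde a$. Because $\tilde a$ is simple these two lifts are disjoint or equal, and they are not equal since $c_h$ crosses $\partial U$, so $h$ does not stabilize $\partial U$. As $c_h$ is $h$-invariant, the fact that it crosses $\partial U$ forces it to cross $h(\partial U)$ as well; writing $P=c_h\cap\partial U$ we have $h(P)=c_h\cap h(\partial U)$.

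Next I would pin down the dynamics. The hypothesis that $U$ covers the repelling fixed point $p^-$ of $h$ means $p^-$ lies in the ideal arc of $U$, while the attracting fixed point $p^+$ lies in the ideal arc of $\mathbf{D}\setminus U$ because $c_h$ crosses $\partial U$. The standard nesting for a hyperbolic isometry then gives $h(\mathbf{D}\setminus U)\subsetneq \mathbf{D}\setminus U$, so $h(\partial U)$ lies in the open half-plane $\mathbf{D}\setminus U=\Omega_{\hat a}\cup\bigcup_iU_i$. Because $\Omega_{\hat a}$ is a component of $\mathbf{D}\setminus\varrho^{-1}(\tilde a)$, the lift $h(\partial U)$ is disjoint from $\Omega_{\hat a}$ and from every boundary geodesic $\partial U_i$; being connected it therefore lies in the closure of a single maximal half-plane $U_0$, and $U_0\neq U$ since $h(\partial U)\subset\mathbf{D}\setminus U$. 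A short check in the upper half-plane model (normalizing $h(z)=\lambda z$, $\lambda>1$, with $\partial U$ a semicircle straddling $0=p^-$) shows that $\Omega_{\hat a}$, being adjacent to $\partial U$, lies on the side of $h(\partial U)$ opposite $p^+$, so the half-plane $h(\mathbf{D}\setminus U)$ (the side of $h(\partial U)$ containing $p^+$) is disjoint from $\Omega_{\hat a}$, whence $h(\mathbf{D}\setminus U)\subseteq U_0$. This proves the first assertion.

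For the second assertion I would compute the geometric intersection number $i(\varrho(c_h),\tilde a)$ as the number of lifts of $\tilde a$ meeting $c_h$ in a fundamental segment $[P,h(P))$ for the action of $\langle h\rangle$ on its axis (taking $h$ primitive). Travelling along $c_h$ from $P$ into $\mathbf{D}\setminus U$ one first enters $\Omega_{\hat a}$; since $\Omega_{\hat a}$ is convex, $c_h$ meets its boundary exactly twice and leaves $\Omega_{\hat a}$ across a second boundary geodesic $\partial U_j\neq\partial U$ at a point $Q\in(P,h(P)]$, crossing no lift of $\tilde a$ on the way (as $\Omega_{\hat a}\cap\varrho^{-1}(\tilde a)=\emptyset$). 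Two half-planes bounded by the same geodesic and nested must coincide, so $h(\mathbf{D}\setminus U)=U_0$ is equivalent to $h(\partial U)=\partial U_0$, i.e. to $h(\partial U)$ being a boundary geodesic of $\Omega_{\hat a}$, which in turn is equivalent to $Q=h(P)$ (so that $\partial U_j=h(\partial U)=\partial U_0$). When $Q=h(P)$ the only lift met in $[P,h(P))$ is $\partial U$ itself, giving $i(\varrho(c_h),\tilde a)=1$; when $Q\neq h(P)$ we have $Q\in(P,h(P))$ and $\partial U_j$ contributes a second, distinct intersection, so $i(\varrho(c_h),\tilde a)\geq 2$. This yields $h(\mathbf{D}\setminus U)=U_0\iff i(\varrho(c_h),\tilde a)=1$, as required.

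The step I expect to be the main obstacle is the ``side'' bookkeeping in the second paragraph: verifying that $h(\mathbf{D}\setminus U)$ lands on the $U_0$ side rather than the $\Omega_{\hat a}$ side of $h(\partial U)$, i.e. that $h(\mathbf{D}\setminus U)\cap\Omega_{\hat a}=\emptyset$, together with the companion convexity point that a geodesic crosses $\partial\Omega_{\hat a}$ in exactly two distinct boundary geodesics, so that the first-exit geodesic $\partial U_j$ is well defined and distinct from $\partial U$. Everything else reduces to the simplicity of $\tilde a$ (disjointness of distinct lifts) and to $\Omega_{\hat a}$ being a complementary component of $\varrho^{-1}(\tilde a)$ (no interior crossings).
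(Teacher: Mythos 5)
Your proof is correct and follows essentially the same approach as the paper's: identify $h(\partial U)$ as the lift of $\tilde a$ met one full period along the axis $c_h$, and relate the intersection count $i(\varrho(c_h),\tilde a)$ to the number of boundary geodesics of $\Omega_{\hat a}$ crossed in a fundamental segment $[P,h(P))$. Your write-up is in fact more careful than the paper's (which argues the periodicity downstairs on $\tilde S$ and leaves the containment $h(\mathbf{D}\backslash U)\subseteq U_0$ and the convexity/first-exit bookkeeping implicit), so no changes are needed.
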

\begin{proof}
Under the universal covering map $\varrho:\mathbf{D}\rightarrow \tilde{S}$, $\partial U$ projects to $\tilde{a}$. Parametrize $c_h=c_h(t)$, $-\infty<t<+\infty$, so that $c_h(-\infty)$ is the repelling fixed point of $h$ while $c_h(+\infty)$ is the attracting fixed point of $h$. Let $c_h(t_0)=c_h\cap \partial U$. 
Let $\tilde{\gamma}$ denote the projection of $c_h$ under the map $\varrho$. Then $\tilde{\gamma}$ is a non-trivial closed geodesic. 

Now $\varrho(c_h(t_0))$ is one of the points of intersection between $\tilde{a}$ and $\tilde{\gamma}$. Since both $\tilde{a}$ and $\tilde{\gamma}$ are closed, there must exist a number $T>0$ such that $\varrho(c_h(t_0+T))$ is also an intersection point of $\tilde{a}$ and $\tilde{\gamma}$. This implies that there is an element $U'\in \mathscr{U}_{\hat{a}}$, disjoint from $U$, such tat $c_h(t_0+T)=\partial U'\cap c_h$. But $U'$ must be included in a maximal element $U_0$ of $\mathscr{U}_{\hat{a}}$. Clearly, $U_0$ is disjoint from $U$,  and $U_0=U'$ if and only if $\varrho(c_h)$ intersects $\tilde{a}$ exactly once. 
\end{proof}

\section{Simple closed geodesics under the actions of elements of $\mathscr{F}$}
\setcounter{equation}{0}

In this section, we study the actions of essential hyperbolic elements $g$ of $G$ on the configuration determined by $\mathscr{U}_{\hat{a}}$. The results lead to new constructions of pseudo-Anosov maps on $S$. 
 
Let $f\in \mathscr{F}$. Write $f=g^*$ for some $g\in G$. Then $g$ is an essential hyperbolic element whose axis $\hat{c}=\hat{c}_g$ projects to a filling closed geodesic $\tilde{c}$ on $\tilde{S}$ under the universal covering map $\varrho:\mathbf{D}\rightarrow \tilde{S}$. 

Let $a\subset S$ be a simple closed geodesic. If $a$ projects to a trivial curve on $\tilde{S}$, then by Theorem 1.1 of \cite{CZ3}, $(a,f^m(a))$ fills $S$ for all $m\geq 1$. Thus we may assume in this section that $a$ projects to a non-trivial curve that is homotopic to $\tilde{a}$ on $\tilde{S}$.

Assume that  $\hat{c}\cap \Omega_{\hat{a}}\neq \emptyset$. Since $\tilde{c}$ intersects $\tilde{a}$, $\hat{c}$ intersects a geodesic $\hat{a}$ for which $\varrho(\hat{a})=\tilde{a}$. Let $U\in \mathscr{U}_{\hat{a}}$ be a maximal element such that $\partial U=\hat{a}$. Assume also that $U$ covers the attracting fixed point of $g$ (by reversing the orientation if necessary). By Lemma \ref{A1}, there is another maximal element $U_0\in \mathscr{U}_{\hat{a}}$ that covers the repelling fixed point of $g$. Let $\hat{b}=\partial U_0$. See Figure 1 (a). In the figure both $\hat{a}$ and $\hat{b}$ project to $\tilde{a}$ under $\varrho:\mathbf{D}\rightarrow \tilde{S}$. Observe also that $g(\hat{b})$ does not intersect $\hat{a}$, but $g(\hat{b})$ could be equal to $\hat{a}$. 

\bigskip
\medskip

\unitlength 1mm 
\linethickness{0.4pt}
\ifx\plotpoint\undefined\newsavebox{\plotpoint}\fi 
\begin{picture}(131.5,59.959)(0,0)
\put(61.209,34.5){\line(0,1){1.0884}}
\put(61.185,35.588){\line(0,1){1.0865}}
\put(61.115,36.675){\line(0,1){1.0825}}
\multiput(60.999,37.757)(-.032471,.215305){5}{\line(0,1){.215305}}
\multiput(60.837,38.834)(-.029747,.152657){7}{\line(0,1){.152657}}
\multiput(60.629,39.903)(-.031716,.13234){8}{\line(0,1){.13234}}
\multiput(60.375,40.961)(-.033195,.116323){9}{\line(0,1){.116323}}
\multiput(60.076,42.008)(-.031204,.093925){11}{\line(0,1){.093925}}
\multiput(59.733,43.041)(-.032259,.084796){12}{\line(0,1){.084796}}
\multiput(59.346,44.059)(-.0330964,.0769287){13}{\line(0,1){.0769287}}
\multiput(58.916,45.059)(-.0315078,.0653842){15}{\line(0,1){.0653842}}
\multiput(58.443,46.04)(-.0321323,.0599788){16}{\line(0,1){.0599788}}
\multiput(57.929,46.999)(-.032628,.055106){17}{\line(0,1){.055106}}
\multiput(57.374,47.936)(-.0330122,.0506795){18}{\line(0,1){.0506795}}
\multiput(56.78,48.848)(-.0332989,.0466312){19}{\line(0,1){.0466312}}
\multiput(56.147,49.734)(-.033499,.0429066){20}{\line(0,1){.0429066}}
\multiput(55.477,50.593)(-.0336217,.0394621){21}{\line(0,1){.0394621}}
\multiput(54.771,51.421)(-.0336745,.0362618){22}{\line(0,1){.0362618}}
\multiput(54.03,52.219)(-.0336639,.0332764){23}{\line(-1,0){.0336639}}
\multiput(53.256,52.984)(-.0366493,.0332524){22}{\line(-1,0){.0366493}}
\multiput(52.45,53.716)(-.0398487,.0331625){21}{\line(-1,0){.0398487}}
\multiput(51.613,54.412)(-.0432916,.0329999){20}{\line(-1,0){.0432916}}
\multiput(50.747,55.072)(-.0470136,.0327567){19}{\line(-1,0){.0470136}}
\multiput(49.854,55.695)(-.0510584,.0324232){18}{\line(-1,0){.0510584}}
\multiput(48.935,56.278)(-.0554801,.0319877){17}{\line(-1,0){.0554801}}
\multiput(47.992,56.822)(-.0643699,.0335313){15}{\line(-1,0){.0643699}}
\multiput(47.026,57.325)(-.0704407,.032945){14}{\line(-1,0){.0704407}}
\multiput(46.04,57.786)(-.0773067,.0322035){13}{\line(-1,0){.0773067}}
\multiput(45.035,58.205)(-.085164,.031275){12}{\line(-1,0){.085164}}
\multiput(44.013,58.58)(-.103708,.033126){10}{\line(-1,0){.103708}}
\multiput(42.976,58.911)(-.116699,.031846){9}{\line(-1,0){.116699}}
\multiput(41.926,59.198)(-.132698,.030181){8}{\line(-1,0){.132698}}
\multiput(40.864,59.44)(-.17849,.032641){6}{\line(-1,0){.17849}}
\multiput(39.793,59.635)(-.215666,.029975){5}{\line(-1,0){.215666}}
\put(38.715,59.785){\line(-1,0){1.0838}}
\put(37.631,59.889){\line(-1,0){1.0872}}
\put(36.544,59.946){\line(-1,0){1.0886}}
\put(35.455,59.957){\line(-1,0){1.0881}}
\put(34.367,59.921){\line(-1,0){1.0856}}
\multiput(33.282,59.839)(-.27027,-.03218){4}{\line(-1,0){.27027}}
\multiput(32.2,59.71)(-.179095,-.029135){6}{\line(-1,0){.179095}}
\multiput(31.126,59.535)(-.152302,-.031513){7}{\line(-1,0){.152302}}
\multiput(30.06,59.314)(-.131964,-.033246){8}{\line(-1,0){.131964}}
\multiput(29.004,59.048)(-.104337,-.031086){10}{\line(-1,0){.104337}}
\multiput(27.961,58.738)(-.093557,-.03229){11}{\line(-1,0){.093557}}
\multiput(26.932,58.382)(-.084417,-.033238){12}{\line(-1,0){.084417}}
\multiput(25.919,57.984)(-.0710731,-.0315575){14}{\line(-1,0){.0710731}}
\multiput(24.924,57.542)(-.065015,-.0322628){15}{\line(-1,0){.065015}}
\multiput(23.948,57.058)(-.0596027,-.0328246){16}{\line(-1,0){.0596027}}
\multiput(22.995,56.533)(-.0547245,-.0332639){17}{\line(-1,0){.0547245}}
\multiput(22.064,55.967)(-.0502939,-.0335968){18}{\line(-1,0){.0502939}}
\multiput(21.159,55.362)(-.0439303,-.0321447){20}{\line(-1,0){.0439303}}
\multiput(20.28,54.72)(-.0404913,-.0323748){21}{\line(-1,0){.0404913}}
\multiput(19.43,54.04)(-.0372942,-.0325274){22}{\line(-1,0){.0372942}}
\multiput(18.61,53.324)(-.0343099,-.0326099){23}{\line(-1,0){.0343099}}
\multiput(17.821,52.574)(-.0328843,-.034047){23}{\line(0,-1){.034047}}
\multiput(17.064,51.791)(-.0328258,-.0370318){22}{\line(0,-1){.0370318}}
\multiput(16.342,50.976)(-.0326989,-.04023){21}{\line(0,-1){.04023}}
\multiput(15.655,50.131)(-.0324964,-.0436708){20}{\line(0,-1){.0436708}}
\multiput(15.005,49.258)(-.0322101,-.0473897){19}{\line(0,-1){.0473897}}
\multiput(14.393,48.358)(-.0337022,-.0544557){17}{\line(0,-1){.0544557}}
\multiput(13.82,47.432)(-.0333021,-.0593372){16}{\line(0,-1){.0593372}}
\multiput(13.288,46.482)(-.0327838,-.0647539){15}{\line(0,-1){.0647539}}
\multiput(12.796,45.511)(-.0321271,-.0708175){14}{\line(0,-1){.0708175}}
\multiput(12.346,44.52)(-.0313062,-.0776744){13}{\line(0,-1){.0776744}}
\multiput(11.939,43.51)(-.03304,-.093295){11}{\line(0,-1){.093295}}
\multiput(11.576,42.484)(-.031923,-.104084){10}{\line(0,-1){.104084}}
\multiput(11.256,41.443)(-.030493,-.11706){9}{\line(0,-1){.11706}}
\multiput(10.982,40.389)(-.032735,-.152044){7}{\line(0,-1){.152044}}
\multiput(10.753,39.325)(-.030572,-.178856){6}{\line(0,-1){.178856}}
\multiput(10.569,38.252)(-.027476,-.215999){5}{\line(0,-1){.215999}}
\put(10.432,37.172){\line(0,-1){1.0849}}
\put(10.341,36.087){\line(0,-1){4.3487}}
\multiput(10.442,31.738)(.028243,-.2159){5}{\line(0,-1){.2159}}
\multiput(10.583,30.659)(.031206,-.178746){6}{\line(0,-1){.178746}}
\multiput(10.77,29.586)(.033274,-.151927){7}{\line(0,-1){.151927}}
\multiput(11.003,28.523)(.030908,-.116951){9}{\line(0,-1){.116951}}
\multiput(11.281,27.47)(.032292,-.10397){10}{\line(0,-1){.10397}}
\multiput(11.604,26.431)(.033371,-.093177){11}{\line(0,-1){.093177}}
\multiput(11.971,25.406)(.0315817,-.0775628){13}{\line(0,-1){.0775628}}
\multiput(12.382,24.397)(.0323783,-.070703){14}{\line(0,-1){.070703}}
\multiput(12.835,23.407)(.0330134,-.0646371){15}{\line(0,-1){.0646371}}
\multiput(13.33,22.438)(.0335125,-.0592186){16}{\line(0,-1){.0592186}}
\multiput(13.867,21.49)(.0320122,-.0513171){18}{\line(0,-1){.0513171}}
\multiput(14.443,20.567)(.0323781,-.0472751){19}{\line(0,-1){.0472751}}
\multiput(15.058,19.668)(.0326512,-.0435552){20}{\line(0,-1){.0435552}}
\multiput(15.711,18.797)(.0328415,-.0401137){21}{\line(0,-1){.0401137}}
\multiput(16.401,17.955)(.0329571,-.0369151){22}{\line(0,-1){.0369151}}
\multiput(17.126,17.143)(.033005,-.03393){23}{\line(0,-1){.03393}}
\multiput(17.885,16.362)(.0344254,-.0324879){23}{\line(1,0){.0344254}}
\multiput(18.677,15.615)(.0374094,-.0323948){22}{\line(1,0){.0374094}}
\multiput(19.5,14.903)(.040606,-.0322309){21}{\line(1,0){.040606}}
\multiput(20.352,14.226)(.0463623,-.0336722){19}{\line(1,0){.0463623}}
\multiput(21.233,13.586)(.0504128,-.0334181){18}{\line(1,0){.0504128}}
\multiput(22.141,12.984)(.0548423,-.0330694){17}{\line(1,0){.0548423}}
\multiput(23.073,12.422)(.0597188,-.0326128){16}{\line(1,0){.0597188}}
\multiput(24.028,11.9)(.0651291,-.0320318){15}{\line(1,0){.0651291}}
\multiput(25.005,11.42)(.0766605,-.0337131){13}{\line(1,0){.0766605}}
\multiput(26.002,10.982)(.084534,-.032938){12}{\line(1,0){.084534}}
\multiput(27.016,10.586)(.093671,-.031957){11}{\line(1,0){.093671}}
\multiput(28.047,10.235)(.104447,-.030716){10}{\line(1,0){.104447}}
\multiput(29.091,9.928)(.132081,-.032777){8}{\line(1,0){.132081}}
\multiput(30.148,9.666)(.152413,-.030972){7}{\line(1,0){.152413}}
\multiput(31.215,9.449)(.179198,-.028499){6}{\line(1,0){.179198}}
\multiput(32.29,9.278)(.27038,-.03122){4}{\line(1,0){.27038}}
\put(33.371,9.153){\line(1,0){1.0859}}
\put(34.457,9.074){\line(1,0){1.0882}}
\put(35.546,9.042){\line(1,0){1.0886}}
\put(36.634,9.057){\line(1,0){1.087}}
\put(37.721,9.118){\line(1,0){1.0834}}
\multiput(38.805,9.225)(.215559,.030741){5}{\line(1,0){.215559}}
\multiput(39.882,9.379)(.178373,.033274){6}{\line(1,0){.178373}}
\multiput(40.953,9.579)(.13259,.030652){8}{\line(1,0){.13259}}
\multiput(42.013,9.824)(.116585,.03226){9}{\line(1,0){.116585}}
\multiput(43.063,10.114)(.10359,.033494){10}{\line(1,0){.10359}}
\multiput(44.098,10.449)(.085052,.031577){12}{\line(1,0){.085052}}
\multiput(45.119,10.828)(.0771919,.0324777){13}{\line(1,0){.0771919}}
\multiput(46.123,11.25)(.0703233,.0331948){14}{\line(1,0){.0703233}}
\multiput(47.107,11.715)(.0602348,.0316496){16}{\line(1,0){.0602348}}
\multiput(48.071,12.222)(.0553662,.0321845){17}{\line(1,0){.0553662}}
\multiput(49.012,12.769)(.050943,.0326042){18}{\line(1,0){.050943}}
\multiput(49.929,13.356)(.046897,.0329234){19}{\line(1,0){.046897}}
\multiput(50.82,13.981)(.0431742,.0331534){20}{\line(1,0){.0431742}}
\multiput(51.684,14.644)(.0397308,.0333037){21}{\line(1,0){.0397308}}
\multiput(52.518,15.344)(.036531,.0333823){22}{\line(1,0){.036531}}
\multiput(53.322,16.078)(.0335456,.0333956){23}{\line(1,0){.0335456}}
\multiput(54.093,16.846)(.0335456,.0363811){22}{\line(0,1){.0363811}}
\multiput(54.831,17.646)(.0334814,.0395812){21}{\line(0,1){.0395812}}
\multiput(55.534,18.478)(.0333465,.0430253){20}{\line(0,1){.0430253}}
\multiput(56.201,19.338)(.0331331,.0467491){19}{\line(0,1){.0467491}}
\multiput(56.831,20.226)(.0328321,.0507964){18}{\line(0,1){.0507964}}
\multiput(57.422,21.141)(.0324322,.0552215){17}{\line(0,1){.0552215}}
\multiput(57.973,22.079)(.0319192,.0600925){16}{\line(0,1){.0600925}}
\multiput(58.484,23.041)(.0335095,.0701739){14}{\line(0,1){.0701739}}
\multiput(58.953,24.023)(.0328232,.0770457){13}{\line(0,1){.0770457}}
\multiput(59.38,25.025)(.031957,.08491){12}{\line(0,1){.08491}}
\multiput(59.763,26.044)(.03087,.094035){11}{\line(0,1){.094035}}
\multiput(60.103,27.078)(.032782,.11644){9}{\line(0,1){.11644}}
\multiput(60.398,28.126)(.031246,.132452){8}{\line(0,1){.132452}}
\multiput(60.648,29.186)(.029205,.152762){7}{\line(0,1){.152762}}
\multiput(60.852,30.255)(.031706,.215419){5}{\line(0,1){.215419}}
\put(61.011,31.332){\line(0,1){1.0829}}
\put(61.123,32.415){\line(0,1){2.0849}}
\put(128.708,34.5){\line(0,1){1.0884}}
\put(128.685,35.588){\line(0,1){1.0865}}
\put(128.615,36.675){\line(0,1){1.0825}}
\multiput(128.499,37.757)(-.032471,.215305){5}{\line(0,1){.215305}}
\multiput(128.337,38.834)(-.029747,.152657){7}{\line(0,1){.152657}}
\multiput(128.129,39.903)(-.031716,.13234){8}{\line(0,1){.13234}}
\multiput(127.875,40.961)(-.033195,.116323){9}{\line(0,1){.116323}}
\multiput(127.576,42.008)(-.031204,.093925){11}{\line(0,1){.093925}}
\multiput(127.233,43.041)(-.032259,.084796){12}{\line(0,1){.084796}}
\multiput(126.846,44.059)(-.0330964,.0769287){13}{\line(0,1){.0769287}}
\multiput(126.416,45.059)(-.0315078,.0653842){15}{\line(0,1){.0653842}}
\multiput(125.943,46.04)(-.0321323,.0599788){16}{\line(0,1){.0599788}}
\multiput(125.429,46.999)(-.032628,.055106){17}{\line(0,1){.055106}}
\multiput(124.874,47.936)(-.0330122,.0506795){18}{\line(0,1){.0506795}}
\multiput(124.28,48.848)(-.0332989,.0466312){19}{\line(0,1){.0466312}}
\multiput(123.647,49.734)(-.033499,.0429066){20}{\line(0,1){.0429066}}
\multiput(122.977,50.593)(-.0336217,.0394621){21}{\line(0,1){.0394621}}
\multiput(122.271,51.421)(-.0336745,.0362618){22}{\line(0,1){.0362618}}
\multiput(121.53,52.219)(-.0336639,.0332764){23}{\line(-1,0){.0336639}}
\multiput(120.756,52.984)(-.0366493,.0332524){22}{\line(-1,0){.0366493}}
\multiput(119.95,53.716)(-.0398487,.0331625){21}{\line(-1,0){.0398487}}
\multiput(119.113,54.412)(-.0432916,.0329999){20}{\line(-1,0){.0432916}}
\multiput(118.247,55.072)(-.0470136,.0327567){19}{\line(-1,0){.0470136}}
\multiput(117.354,55.695)(-.0510584,.0324232){18}{\line(-1,0){.0510584}}
\multiput(116.435,56.278)(-.0554801,.0319877){17}{\line(-1,0){.0554801}}
\multiput(115.492,56.822)(-.0643699,.0335313){15}{\line(-1,0){.0643699}}
\multiput(114.526,57.325)(-.0704407,.032945){14}{\line(-1,0){.0704407}}
\multiput(113.54,57.786)(-.0773067,.0322035){13}{\line(-1,0){.0773067}}
\multiput(112.535,58.205)(-.085164,.031275){12}{\line(-1,0){.085164}}
\multiput(111.513,58.58)(-.103708,.033126){10}{\line(-1,0){.103708}}
\multiput(110.476,58.911)(-.116699,.031846){9}{\line(-1,0){.116699}}
\multiput(109.426,59.198)(-.132698,.030181){8}{\line(-1,0){.132698}}
\multiput(108.364,59.44)(-.17849,.032641){6}{\line(-1,0){.17849}}
\multiput(107.293,59.635)(-.215666,.029975){5}{\line(-1,0){.215666}}
\put(106.215,59.785){\line(-1,0){1.0838}}
\put(105.131,59.889){\line(-1,0){1.0872}}
\put(104.044,59.946){\line(-1,0){1.0886}}
\put(102.955,59.957){\line(-1,0){1.0881}}
\put(101.867,59.921){\line(-1,0){1.0856}}
\multiput(100.782,59.839)(-.27027,-.03218){4}{\line(-1,0){.27027}}
\multiput(99.7,59.71)(-.179095,-.029135){6}{\line(-1,0){.179095}}
\multiput(98.626,59.535)(-.152302,-.031513){7}{\line(-1,0){.152302}}
\multiput(97.56,59.314)(-.131964,-.033246){8}{\line(-1,0){.131964}}
\multiput(96.504,59.048)(-.104337,-.031086){10}{\line(-1,0){.104337}}
\multiput(95.461,58.738)(-.093557,-.03229){11}{\line(-1,0){.093557}}
\multiput(94.432,58.382)(-.084417,-.033238){12}{\line(-1,0){.084417}}
\multiput(93.419,57.984)(-.0710731,-.0315575){14}{\line(-1,0){.0710731}}
\multiput(92.424,57.542)(-.065015,-.0322628){15}{\line(-1,0){.065015}}
\multiput(91.448,57.058)(-.0596027,-.0328246){16}{\line(-1,0){.0596027}}
\multiput(90.495,56.533)(-.0547245,-.0332639){17}{\line(-1,0){.0547245}}
\multiput(89.564,55.967)(-.0502939,-.0335968){18}{\line(-1,0){.0502939}}
\multiput(88.659,55.362)(-.0439303,-.0321447){20}{\line(-1,0){.0439303}}
\multiput(87.78,54.72)(-.0404913,-.0323748){21}{\line(-1,0){.0404913}}
\multiput(86.93,54.04)(-.0372942,-.0325274){22}{\line(-1,0){.0372942}}
\multiput(86.11,53.324)(-.0343099,-.0326099){23}{\line(-1,0){.0343099}}
\multiput(85.321,52.574)(-.0328843,-.034047){23}{\line(0,-1){.034047}}
\multiput(84.564,51.791)(-.0328258,-.0370318){22}{\line(0,-1){.0370318}}
\multiput(83.842,50.976)(-.0326989,-.04023){21}{\line(0,-1){.04023}}
\multiput(83.155,50.131)(-.0324964,-.0436708){20}{\line(0,-1){.0436708}}
\multiput(82.505,49.258)(-.0322101,-.0473897){19}{\line(0,-1){.0473897}}
\multiput(81.893,48.358)(-.0337022,-.0544557){17}{\line(0,-1){.0544557}}
\multiput(81.32,47.432)(-.0333021,-.0593372){16}{\line(0,-1){.0593372}}
\multiput(80.788,46.482)(-.0327838,-.0647539){15}{\line(0,-1){.0647539}}
\multiput(80.296,45.511)(-.0321271,-.0708175){14}{\line(0,-1){.0708175}}
\multiput(79.846,44.52)(-.0313062,-.0776744){13}{\line(0,-1){.0776744}}
\multiput(79.439,43.51)(-.03304,-.093295){11}{\line(0,-1){.093295}}
\multiput(79.076,42.484)(-.031923,-.104084){10}{\line(0,-1){.104084}}
\multiput(78.756,41.443)(-.030493,-.11706){9}{\line(0,-1){.11706}}
\multiput(78.482,40.389)(-.032735,-.152044){7}{\line(0,-1){.152044}}
\multiput(78.253,39.325)(-.030572,-.178856){6}{\line(0,-1){.178856}}
\multiput(78.069,38.252)(-.027476,-.215999){5}{\line(0,-1){.215999}}
\put(77.932,37.172){\line(0,-1){1.0849}}
\put(77.841,36.087){\line(0,-1){4.3487}}
\multiput(77.942,31.738)(.028243,-.2159){5}{\line(0,-1){.2159}}
\multiput(78.083,30.659)(.031206,-.178746){6}{\line(0,-1){.178746}}
\multiput(78.27,29.586)(.033274,-.151927){7}{\line(0,-1){.151927}}
\multiput(78.503,28.523)(.030908,-.116951){9}{\line(0,-1){.116951}}
\multiput(78.781,27.47)(.032292,-.10397){10}{\line(0,-1){.10397}}
\multiput(79.104,26.431)(.033371,-.093177){11}{\line(0,-1){.093177}}
\multiput(79.471,25.406)(.0315817,-.0775628){13}{\line(0,-1){.0775628}}
\multiput(79.882,24.397)(.0323783,-.070703){14}{\line(0,-1){.070703}}
\multiput(80.335,23.407)(.0330134,-.0646371){15}{\line(0,-1){.0646371}}
\multiput(80.83,22.438)(.0335125,-.0592186){16}{\line(0,-1){.0592186}}
\multiput(81.367,21.49)(.0320122,-.0513171){18}{\line(0,-1){.0513171}}
\multiput(81.943,20.567)(.0323781,-.0472751){19}{\line(0,-1){.0472751}}
\multiput(82.558,19.668)(.0326512,-.0435552){20}{\line(0,-1){.0435552}}
\multiput(83.211,18.797)(.0328415,-.0401137){21}{\line(0,-1){.0401137}}
\multiput(83.901,17.955)(.0329571,-.0369151){22}{\line(0,-1){.0369151}}
\multiput(84.626,17.143)(.033005,-.03393){23}{\line(0,-1){.03393}}
\multiput(85.385,16.362)(.0344254,-.0324879){23}{\line(1,0){.0344254}}
\multiput(86.177,15.615)(.0374094,-.0323948){22}{\line(1,0){.0374094}}
\multiput(87,14.903)(.040606,-.0322309){21}{\line(1,0){.040606}}
\multiput(87.852,14.226)(.0463623,-.0336722){19}{\line(1,0){.0463623}}
\multiput(88.733,13.586)(.0504128,-.0334181){18}{\line(1,0){.0504128}}
\multiput(89.641,12.984)(.0548423,-.0330694){17}{\line(1,0){.0548423}}
\multiput(90.573,12.422)(.0597188,-.0326128){16}{\line(1,0){.0597188}}
\multiput(91.528,11.9)(.0651291,-.0320318){15}{\line(1,0){.0651291}}
\multiput(92.505,11.42)(.0766605,-.0337131){13}{\line(1,0){.0766605}}
\multiput(93.502,10.982)(.084534,-.032938){12}{\line(1,0){.084534}}
\multiput(94.516,10.586)(.093671,-.031957){11}{\line(1,0){.093671}}
\multiput(95.547,10.235)(.104447,-.030716){10}{\line(1,0){.104447}}
\multiput(96.591,9.928)(.132081,-.032777){8}{\line(1,0){.132081}}
\multiput(97.648,9.666)(.152413,-.030972){7}{\line(1,0){.152413}}
\multiput(98.715,9.449)(.179198,-.028499){6}{\line(1,0){.179198}}
\multiput(99.79,9.278)(.27038,-.03122){4}{\line(1,0){.27038}}
\put(100.871,9.153){\line(1,0){1.0859}}
\put(101.957,9.074){\line(1,0){1.0882}}
\put(103.046,9.042){\line(1,0){1.0886}}
\put(104.134,9.057){\line(1,0){1.087}}
\put(105.221,9.118){\line(1,0){1.0834}}
\multiput(106.305,9.225)(.215559,.030741){5}{\line(1,0){.215559}}
\multiput(107.382,9.379)(.178373,.033274){6}{\line(1,0){.178373}}
\multiput(108.453,9.579)(.13259,.030652){8}{\line(1,0){.13259}}
\multiput(109.513,9.824)(.116585,.03226){9}{\line(1,0){.116585}}
\multiput(110.563,10.114)(.10359,.033494){10}{\line(1,0){.10359}}
\multiput(111.598,10.449)(.085052,.031577){12}{\line(1,0){.085052}}
\multiput(112.619,10.828)(.0771919,.0324777){13}{\line(1,0){.0771919}}
\multiput(113.623,11.25)(.0703233,.0331948){14}{\line(1,0){.0703233}}
\multiput(114.607,11.715)(.0602348,.0316496){16}{\line(1,0){.0602348}}
\multiput(115.571,12.222)(.0553662,.0321845){17}{\line(1,0){.0553662}}
\multiput(116.512,12.769)(.050943,.0326042){18}{\line(1,0){.050943}}
\multiput(117.429,13.356)(.046897,.0329234){19}{\line(1,0){.046897}}
\multiput(118.32,13.981)(.0431742,.0331534){20}{\line(1,0){.0431742}}
\multiput(119.184,14.644)(.0397308,.0333037){21}{\line(1,0){.0397308}}
\multiput(120.018,15.344)(.036531,.0333823){22}{\line(1,0){.036531}}
\multiput(120.822,16.078)(.0335456,.0333956){23}{\line(1,0){.0335456}}
\multiput(121.593,16.846)(.0335456,.0363811){22}{\line(0,1){.0363811}}
\multiput(122.331,17.646)(.0334814,.0395812){21}{\line(0,1){.0395812}}
\multiput(123.034,18.478)(.0333465,.0430253){20}{\line(0,1){.0430253}}
\multiput(123.701,19.338)(.0331331,.0467491){19}{\line(0,1){.0467491}}
\multiput(124.331,20.226)(.0328321,.0507964){18}{\line(0,1){.0507964}}
\multiput(124.922,21.141)(.0324322,.0552215){17}{\line(0,1){.0552215}}
\multiput(125.473,22.079)(.0319192,.0600925){16}{\line(0,1){.0600925}}
\multiput(125.984,23.041)(.0335095,.0701739){14}{\line(0,1){.0701739}}
\multiput(126.453,24.023)(.0328232,.0770457){13}{\line(0,1){.0770457}}
\multiput(126.88,25.025)(.031957,.08491){12}{\line(0,1){.08491}}
\multiput(127.263,26.044)(.03087,.094035){11}{\line(0,1){.094035}}
\multiput(127.603,27.078)(.032782,.11644){9}{\line(0,1){.11644}}
\multiput(127.898,28.126)(.031246,.132452){8}{\line(0,1){.132452}}
\multiput(128.148,29.186)(.029205,.152762){7}{\line(0,1){.152762}}
\multiput(128.352,30.255)(.031706,.215419){5}{\line(0,1){.215419}}
\put(128.511,31.332){\line(0,1){1.0829}}
\put(128.623,32.415){\line(0,1){2.0849}}
\qbezier(15.75,50)(32.375,33.75)(16.5,18.5)
\qbezier(83.25,50)(99.875,33.75)(84,18.5)
\qbezier(52.75,53.25)(36.25,34.125)(50.75,14.5)
\qbezier(120.25,53.25)(103.75,34.125)(118.25,14.5)
\put(61,34.5){\vector(-1,0){50.5}}
\put(128.5,34.5){\vector(-1,0){50.5}}
\put(49.25,46.25){\vector(2,3){.07}}\qbezier(48.25,21)(41.25,34.375)(49.25,46.25)
\put(18,45.75){\vector(-3,4){.07}}\qbezier(18.25,21.75)(28.375,34.25)(18,45.75)
\put(116,49.25){\vector(2,3){.07}}\qbezier(114.75,17.5)(105.625,34.625)(116,49.25)
\put(86.75,48.25){\vector(-3,4){.07}}\qbezier(88,20.75)(98.625,34.5)(86.75,48.25)
\put(88.5,17.25){\makebox(0,0)[cc]{$U_k$}}
\put(114,14.25){\makebox(0,0)[cc]{$U$}}
\put(119,48){\makebox(0,0)[cc]{$\hat{a}$}}
\put(93,45.25){\makebox(0,0)[cc]{$\tau_k^{-s}$}}
\put(111,45.25){\makebox(0,0)[cc]{$\tau_a^r$}}
\put(101.5,32){\makebox(0,0)[cc]{$\hat{c}$}}
\put(34.25,32){\makebox(0,0)[cc]{$\hat{c}$}}
\put(19.25,40){\makebox(0,0)[cc]{$\tau_a^r$}}
\put(51,40.75){\makebox(0,0)[cc]{$\tau_0^{-s}$}}
\put(51.25,18){\makebox(0,0)[cc]{$U_0$}}
\put(16.25,21.75){\makebox(0,0)[cc]{$U$}}
\put(20.25,48.75){\makebox(0,0)[cc]{$\hat{a}$}}
\put(47,49.5){\makebox(0,0)[cc]{$\hat{b}$}}
\put(64.25,34.5){\makebox(0,0)[cc]{$X$}}
\put(7.75,34.5){\makebox(0,0)[cc]{$Y$}}
\put(75.75,34.75){\makebox(0,0)[cc]{$Y$}}
\put(131.5,34.75){\makebox(0,0)[cc]{$X$}}
\put(36,3.25){\makebox(0,0)[cc]{(a)}}
\put(104.25,3){\makebox(0,0)[cc]{(b)}}
\put(70.25,0){\makebox(0,0)[cc]{Figure 1}}
\put(86.25,43.75){\makebox(0,0)[cc]{$\hat{b}_k$}}
\put(81.5,16.75){\makebox(0,0)[cc]{$A$}}
\put(120,12.5){\makebox(0,0)[cc]{$E$}}
\put(81,51.75){\makebox(0,0)[cc]{$B$}}
\put(122.5,55){\makebox(0,0)[cc]{$F$}}
\end{picture}

\medskip
\medskip

 By the construction of $\tau_{\hat{a}}$, $g(\hat{b})\subseteq U$. Thus  $g(\mathbf{D}\backslash U_0)\subset U$. We conclude that for $k\geq 2$,
\begin{equation}\label{IMP}
g^k(U_0)\cap U\neq \emptyset,\  
g^k(U_0)\cup U=\mathbf{D}, \ \mbox{and}\ \partial \left(g^k(U_0)\right)\cap \{\hat{a}\}= \emptyset.
\end{equation}
The case of $k=1$ is interesting. If this occurs, then either (i) $g(U_0)\cap U\neq \emptyset$ and $g(U_0)\cup U=\mathbf{D}$; or (ii)
 $\{g(U_0),U\}$ tessellates the hyperbolic disk $\mathbf{D}$. The later occurs if and only if $\tilde{c}$ and $\tilde{a}$ intersect exactly once. See Section 5 for more details. 
 
For simplicity, we write $U_k=g^{k}(U_0)$, $U_k'=g^{k}(\mathbf{D}\backslash U_0)$, and $\hat{b}_k=g^{k}(\hat{b})$.
Figure 1 (b) depicts the situation that after $g^{k}$ for $k\geq 2$ is performed, the boundary geodesics $\hat{a}$ and $\hat{b}_k$ are disjoint and the union $U\cup U_k$ covers $\mathbf{D}$. 

Write $\mathscr{D}_k=U\cap U_k$ and $\mathbf{D}\backslash \mathscr{D}_k=\{\mathscr{R}_k, \mathscr{L}_k\}$, where $\mathscr{L}_k$ and $\mathscr{R}_k$ be the half planes covering the attracting and repelling fixed point of $g$, respectively. Clearly, the axis $\hat{c}$ of $g$ goes through the region $\mathscr{D}_k$. In what follows the hyperbolic length 
\begin{equation}\label{UI}
\varepsilon_k=\hat{c}\cap \mathscr{D}_k
\end{equation} 
is called the width of $\mathscr{D}_k$ with respect to $\hat{c}$. It is evident for $k=1$, $\varepsilon_1$ is less than the translation length $T_g$ of $g$ which is defined by 
$$
T_g=\mbox{inf} \left\{  \rho\left(z,g(z)\right):z\in \mathbf{D}\right\}.
$$
Furthermore, $\varepsilon_1=0$ if and only if $g(U_0)=\mathbf{D}\backslash \bar{U}$. Let $\gamma\subset S$ be a simple closed geodesic so that $\tilde{\gamma}\subset \tilde{S}$ is non-trivial. The positive Dehn twist $t_{\tilde{\gamma}}$ is well defined. By Lemma 3.2 of \cite{CZ1}, we can choose a lift $\tau_{\hat{\gamma}}:\mathbf{D}\rightarrow \mathbf{D}$ of $t_{\tilde{\gamma}}$ so that $[\tau_{\hat{\gamma}}]^*=t_{\gamma}$, where $\hat{\gamma}\subset \mathbf{D}$ is a geodesic with $\varrho(\hat{\gamma})=\tilde{\gamma}$. As usual, let $\mathscr{U}_{\hat{\gamma}}$ be the collection of half planes determined by $\tau_{\hat{\gamma}}$ and let $\Omega_{\hat{\gamma}}$ be the complement of all maximal elements of $\mathscr{U}_{\hat{\gamma}}$ in $\mathbf{D}$. 

Let $W\in \mathscr{U}_{\hat{\gamma}}$ be a maximal element. Then $\varrho(\partial W)=\tilde{\gamma}$. Assume that
$\partial W$ intersects $\hat{c}$ and $\tilde{\gamma}$ is disjoint from or equal to $\tilde{a}$. Denote by 
$$
W'=\mathbf{D}\backslash \bar{W}.
$$
\begin{lem}\label{L1}
With the above notation, assume also that $\varepsilon_k>T_g$. Then there is an element $W_0\in \mathscr{U}_{\hat{\gamma}}$ such that $W_0'\subset U$ or $W_0'\subset U_k$. 
\end{lem}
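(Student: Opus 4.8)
The plan is to use the width hypothesis $\varepsilon_k>T_g$ to pin down a single lift of $\tilde{\gamma}$ whose crossing with the axis $\hat{c}$ falls strictly inside the strip $\mathscr{D}_k$, and then to use the disjointness of $\tilde{\gamma}$ and $\tilde{a}$ to trap that lift entirely inside $\mathscr{D}_k$; the two half planes it bounds will then automatically sit inside $U$ and inside $U_k$, one of them being the desired $W_0$. First I would analyze how the lifts of $\tilde{\gamma}$ meet $\hat{c}$. Let $X\subset\hat{c}$ be the set of points at which some lift of $\tilde{\gamma}$ crosses $\hat{c}$; it is nonempty because $\partial W\cap\hat{c}\neq\emptyset$ (equivalently, because $\tilde{c}=\varrho(\hat{c})$ is filling and so meets $\tilde{\gamma}$). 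Since $g\in G$ carries lifts of $\tilde{\gamma}$ to lifts of $\tilde{\gamma}$ while translating $\hat{c}$ along itself by $T_g$, we have $g(X)=X$, so $X$ is invariant under translation by $T_g$; fixing $x_0\in X$, the whole progression $\{x_0+nT_g:n\in\mathbf{Z}\}$ lies in $X$. By \eqref{UI} the arc $\hat{c}\cap\mathscr{D}_k$ is open of hyperbolic length $\varepsilon_k>T_g$, hence contains a point $q_0=x_0+nT_g\in X$, and this $q_0$ lies strictly between the two boundary crossings $\hat{a}\cap\hat{c}$ and $\hat{b}_k\cap\hat{c}$ of the strip. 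Let $\hat{\gamma}_0$ be the lift of $\tilde{\gamma}$ through $q_0$; it is distinct from $\hat{a}=\partial U$ and $\hat{b}_k=\partial U_k$, since it meets $\hat{c}$ at an interior point of $\mathscr{D}_k$.

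Next I would confine $\hat{\gamma}_0$ to the strip. As $\tilde{\gamma}$ is disjoint from or equal to $\tilde{a}$, its lift $\hat{\gamma}_0$ is disjoint from or equal to every lift of $\tilde{a}$; being distinct from $\hat{a}$ and $\hat{b}_k$, it is disjoint from both of them. A geodesic disjoint from $\hat{a}$ lies wholly on one side of $\hat{a}$, and since $q_0\in\hat{\gamma}_0$ lies in $\mathscr{D}_k\subset U$, the whole of $\hat{\gamma}_0$ lies in $U$; the same argument with $\hat{b}_k$ gives $\hat{\gamma}_0\subset U_k$. Hence $\hat{\gamma}_0\subset\overline{U\cap U_k}=\overline{\mathscr{D}_k}$, so $\hat{\gamma}_0$ separates the attracting from the repelling fixed point of $g$, and its endpoint arcs are nested between those of $\hat{a}$ and of $\hat{b}_k$. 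Writing $I$ for the half plane bounded by $\hat{\gamma}_0$ whose boundary arc carries the repelling fixed point (so $I\supset\mathscr{R}_k$) and $O$ for the one carrying the attracting fixed point (so $O\supset\mathscr{L}_k$), this nesting of boundary arcs on $\partial\mathbf{D}$ yields $O\subset U$ and $I\subset U_k$, each being the elementary statement that two disjoint geodesics with nested endpoint arcs bound nested half planes.

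Finally I would identify $W_0$. By the construction of $\mathscr{U}_{\hat{\gamma}}$ from the Dehn twist lift $\tau_{\hat{\gamma}}$ (cf. the parallel construction of $\mathscr{U}_{\hat{a}}$ above and Lemma 3.2 of \cite{CZ1}), each lift of $\tilde{\gamma}$ cuts off, on the side prescribed by the twist, a half plane belonging to $\mathscr{U}_{\hat{\gamma}}$; let $W_0$ be the one cut off by $\hat{\gamma}_0$, so that $W_0\in\{I,O\}$. If $W_0=I$, then $W_0'=O\subset U$; if $W_0=O$, then $W_0'=I\subset U_k$. In either case $W_0'\subset U$ or $W_0'\subset U_k$, which is the assertion.

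I expect the first paragraph to be the main obstacle, namely making precise that the crossings of the lifts of $\tilde{\gamma}$ with $\hat{c}$ form a set invariant under translation by $T_g$ and that an open arc of length exceeding $T_g$ is forced to meet it; this is exactly the step that consumes the hypothesis $\varepsilon_k>T_g$. The remaining containments are then a routine consequence of the disjointness of $\tilde{\gamma}$ and $\tilde{a}$ together with the nesting of half planes determined by nested endpoint arcs on $\partial\mathbf{D}$, and the identification of $W_0$ only uses that one side of every lift of $\tilde{\gamma}$ belongs to $\mathscr{U}_{\hat{\gamma}}$.
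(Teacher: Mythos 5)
Your argument is correct, and it reaches the conclusion by a genuinely different route than the paper. The paper's proof is a four-way case analysis on the position of the given maximal element $W$ relative to $\mathscr{D}_k$, $\mathscr{R}_k$ and $\mathscr{L}_k$: in the two nontrivial cases it applies $g^{\pm 1}$ once and invokes Lemma \ref{A1} to land the image element's complement inside $U_k$ or $U$, the hypothesis $\varepsilon_k>T_g$ guaranteeing that one translation by $T_g$ suffices. You instead observe that the set $X$ of points where lifts of $\tilde{\gamma}$ cross the axis $\hat{c}$ is invariant under the translation by $T_g$, so an open subarc of length $\varepsilon_k>T_g$ must contain a crossing point; the lift $\hat{\gamma}_0$ through that point, being disjoint from $\hat{a}$ and $\hat{b}_k$, is then trapped in $\overline{\mathscr{D}_k}$ and both of its complementary half planes are controlled. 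Both proofs consume the same two ingredients ($g$-equivariance of the lift configuration and the width bound), but your packaging isolates exactly where $\varepsilon_k>T_g$ is spent and yields the slightly stronger geometric fact that an entire lift of $\tilde{\gamma}$ lies in the strip; the paper's version, by contrast, hands back a \emph{maximal} element of $\mathscr{U}_{\hat{\gamma}}$ directly, which is the form in which the lemma is actually invoked in the proof of Lemma \ref{L2}, and its case structure is what the author reuses in the borderline situation $\varepsilon_k=T_g$ of the subsequent Remark and of Theorem \ref{T2}. Two small points to tighten: your $W_0$ (the half plane cut off by $\hat{\gamma}_0$ on the twisting side) need not be maximal in $\mathscr{U}_{\hat{\gamma}}$, but the maximal element $W_1\supset W_0$ satisfies $W_1'\subset W_0'$, so the maximal form of the conclusion follows at once and should be stated since it is what gets used later; and you should note explicitly that $\hat{\gamma}_0$ meets $\hat{c}$ exactly once (two distinct geodesics in $\mathbf{D}$), which is what forces $\mathscr{R}_k$ and $\mathscr{L}_k$ onto opposite sides of $\hat{\gamma}_0$ in your nesting step.
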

\begin{proof}
If $\partial W\subset \mathscr{D}_k$, then obviously, either $W'\subset U$ or $W'\subset U_k$. We are done with the choice $W_0=W$. 
If  $\partial W\subset \mathscr{R}_k$ and $W'\subset \mathscr{R}_k$, then $W'\subset U_k$. 
If $\partial W \subset \mathscr{R}_k$ and $W\subset \mathscr{R}_k$, then by Lemma 2.1, $g(\mathbf{D}\backslash W')$ is contained in an element $W_0$ of $\mathscr{U}_{\hat{\gamma}}$. Since $\varepsilon_k>T_g$, we have $W_0'\subset U_k$. 

It remains to consider the case of $\partial W\subset \mathscr{L}_k$. If $W'\subset \mathscr{L}_k$, then $W'\subset U$. If $W\subset \mathscr{L}_k$, then by Lemma 2.1 again, $g^{-1}(\mathbf{D}\backslash W')$ is contained in a maximal element $W_0$ of $\mathscr{U}_{\gamma}$. Since $\varepsilon_k>T_g$, we have $W_0'\subset U$. The lemma is proved. 
\end{proof}
\noindent {\em Remark. } If we know that $\partial W\neq \partial U$ or $\partial U_0$, then the conclusion of the lemma remains valid even if $\varepsilon_k=T_g$. This remark is useful in the proof of Theorem 1.2. 

\medskip
 
Define 
\begin{equation}\label{MAP}
\tau_k=g^{k}\tau_{\hat{a}}g^{-k}. 
\end{equation}
Denote by $\mathscr{U}_k$ the collection of half-planes determined by $\tau_k$, and by $\Omega_k$ the complement of the union of all maximal elements of $\mathscr{U}_k$. Then $\mathscr{U}_k=g^k(\mathscr{U}_{\hat{a}})$ and $\Omega_k=g^k(\Omega_{\hat{a}})$. It is clear that 
$\Omega_k\cap \Omega_{\hat{a}}=\emptyset$. As mentioned earlier, the equivalence classes $[\tau_k]$ and $[\tau_{\hat{a}}]$ are elements of QC$(G)/\! \sim$, and if a geodesic $\hat{a}$ is chosen properly, $[\tau_{\hat{a}}]^*$ is represented by $t_a$. By Lemma 3.2 of \cite{CZ1} again, $[\tau_k]^*$ is represented by the Dehn twist $t_{b_k}$ for some simple closed geodesic $b_k\subset S$. For simplicity,  we write $t_k=t_{b_k}$. It follows that 
\begin{equation}\label{EQUA}
[\tau_{\hat{a}}^r\tau_k^{-s}]^*= [\tau_{\hat{a}}^r]^*\circ  [\tau_k^{-s}]^* = t_{a}^r\circ t_k^{-s}.
\end{equation}
\noindent As usual, we let $\tilde{b}_k$ denote the geodesic on $\tilde{S}$ homotopic to $b_k$ on $\tilde{S}$. Then it is easy to show that $\tilde{b}_k=\tilde{b}$. 

\begin{lem}\label{L2}
Assume that $\varepsilon_k>T_g$ and there is a maximal element $W\in \mathscr{U}_{\hat{\gamma}}$ such that $\partial W$ intersects $\hat{c}$. Then as a mapping class of $Mod_S^x$, $[\tau_{\hat{a}}^r\tau_k^{-s}]^*$ has the property that $[\tau_{\hat{a}}^r\tau_k^{-s}]^*(\gamma)$ is not homotopic to $\gamma$. 
\end{lem}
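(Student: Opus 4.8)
The plan is to transport the statement to the universal cover and read it off the lift $\Phi=\tau_{\hat{a}}^{r}\tau_{k}^{-s}$. First I would record the Bers--Kra dictionary: a simple closed geodesic $\gamma$ on $S$ is encoded by a lift $\partial W_{0}$ of its shadow $\tilde{\gamma}$ together with the position of the $G$-orbit of a fixed lift $\hat{x}$ of the puncture $x$, and the class $[\Phi]^{*}$ fixes $\gamma$ on $S$ exactly when $\Phi(\partial W_{0})=h_{x}(\partial W_{0})$, where $h_{x}\in G$ is the element determined by $\Phi(\hat{x})=h_{x}\hat{x}$ (this $h_{x}$ exists because $[\Phi]^{*}$ fixes $x$). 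The crucial preliminary observation is that on $\tilde{S}$ there is nothing to prove: since every $f\in\mathscr{F}$ is isotopic to the identity on $\tilde{S}$ we have $\tilde{b}_{k}=\tilde{a}$, so the image of $[\Phi]^{*}$ in $\mathrm{Mod}_{\tilde S}$ is $t_{\tilde{a}}^{\,r-s}$, and this fixes $\tilde{\gamma}$ because $\tilde{\gamma}$ is equal to or disjoint from $\tilde{a}$. Hence the entire content of the lemma is the marking, and it suffices to prove $\Phi(\partial W_{0})\neq h_{x}(\partial W_{0})$, or equivalently that $\Psi:=h_{x}^{-1}\Phi$ does not fix the geodesic $\partial W_{0}$. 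By Lemma~\ref{L1} (applicable since $\varepsilon_{k}>T_{g}$ and $\partial W$ meets $\hat{c}$) I may choose $W_{0}\in\mathscr{U}_{\hat{\gamma}}$ with $W_{0}'\subset U$ or $W_{0}'\subset U_{k}$.

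The heart of the argument is a ping-pong on the two half-planes $U$ and $U_{k}$, using the configuration of Section 3: $U\cup U_{k}=\mathbf{D}$ with overlap $\mathscr{D}_{k}$ of width $\varepsilon_{k}>T_{g}$, where $\mathscr{L}_{k}=\mathbf{D}\setminus U_{k}\subset U$ covers the attracting and $\mathscr{R}_{k}=\mathbf{D}\setminus U\subset U_{k}$ the repelling fixed point of $g$. I would first record the support facts that make the two twists play against each other: $\tau_{\hat{a}}$ keeps $U$ invariant and is the identity on $\Omega_{\hat{a}}$, while $\tau_{k}$ keeps $U_{k}$ invariant and is the identity on $\Omega_{k}\subset\mathscr{L}_{k}$. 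Suppose first $W_{0}'\subset U$. The aim is to show that $\Psi$ is a strict contraction on the boundary arc $\partial_{\infty}W_{0}'\subset\mathbf{S}^{1}$: the factor $\tau_{k}^{-s}$ carries $\partial_{\infty}W_{0}'$ into a proper subarc (using Lemma~\ref{A1} and $\varepsilon_{k}>T_{g}$ to keep the image clear of $\mathscr{D}_{k}$), the factor $\tau_{\hat{a}}^{r}$ then carries that subarc strictly inside $\partial_{\infty}U$, and finally $h_{x}^{-1}$ is too small to undo the nesting. A map that contracts $\partial_{\infty}W_{0}'$ strictly into its interior cannot fix both endpoints of $\partial W_{0}$, so $\Psi(\partial W_{0})\neq\partial W_{0}$, as required. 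The case $W_{0}'\subset U_{k}$ is handled symmetrically, interchanging the roles of $(\tau_{\hat{a}},U)$ and $(\tau_{k},U_{k})$ and of the attracting and repelling ends.

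The main obstacle is entirely inside the second paragraph and is twofold. The first difficulty is the twist--interaction: one must prevent $\tau_{\hat{a}}^{r}$ from cancelling the displacement produced by $\tau_{k}^{-s}$, i.e.\ keep the intermediate image geodesic out of the overlap $\mathscr{D}_{k}$ at every stage so that the two arcs nest rather than collide. This is exactly where $\varepsilon_{k}>T_{g}$ is used, through Lemma~\ref{A1} and the Remark following Lemma~\ref{L1}, and it is the step I expect to require the most careful case analysis (the borderline $\varepsilon_{k}=T_{g}$ being precisely the delicate situation flagged in that Remark). The second difficulty is the marked-point bookkeeping: upgrading the conclusion from $\Phi(\partial W_{0})\neq\partial W_{0}$ to $\Psi(\partial W_{0})\neq\partial W_{0}$ means controlling the deck element $h_{x}$ by which $\Phi$ moves the tracked lift of $x$ and showing it cannot coincide with the displacement of $\partial W_{0}$. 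This should hold because $\partial W_{0}$ lies deep inside $U$ (resp.\ $U_{k}$) while the relevant lift of $x$ does not, so the two are displaced by different conjugates of the boundary hyperbolics; but making the contraction strong enough to survive composition with $h_{x}^{-1}$ is the genuine crux of the proof.
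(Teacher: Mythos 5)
Your proposal correctly identifies the geometric engine of the argument (the nesting of half-planes produced by playing $\tau_{\hat{a}}^{r}$ against $\tau_{k}^{-s}$ on $U$ and $U_{k}$, with $\varepsilon_{k}>T_{g}$ guaranteeing the intermediate images stay clear of the overlap), and your use of Lemma~\ref{L1} to produce $W_{0}$ with $W_{0}'\subset U$ or $W_{0}'\subset U_{k}$ matches the paper. But there is a genuine gap, and you have located it yourself without closing it: the ``marked-point bookkeeping.'' You reduce the lemma to showing $\Phi(\partial W_{0})\neq h_{x}(\partial W_{0})$ for a deck element $h_{x}$ tracking a lift of the puncture, observe that the contraction must be ``strong enough to survive composition with $h_{x}^{-1}$,'' call this ``the genuine crux,'' and then offer only the heuristic that $\partial W_{0}$ lies deep inside $U$ while the lift of $x$ does not. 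That is not a proof; $h_{x}$ depends on $r$, $s$ and $k$, and nothing in your argument bounds its displacement independently of the contraction you are trying to preserve.

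The paper avoids this difficulty entirely by using a different (and cleaner) criterion for $[\tau_{\hat{a}}^{r}\tau_{k}^{-s}]^{*}(\gamma)=\gamma$: by Lemma~4.1 of \cite{CZ2}, if the mapping class fixes $\gamma$ then the lift $\tau_{\hat{a}}^{r}\tau_{k}^{-s}$ must send every maximal element of $\mathscr{U}_{\hat{\gamma}}$ to a maximal element of $\mathscr{U}_{\hat{\gamma}}$. Since distinct maximal elements are mutually disjoint, it then suffices to exhibit one maximal element $W_{0}$ whose image under $\zeta=\tau_{\hat{a}}^{r}\tau_{k}^{-s}$ (or under its inverse, in the case $W_{0}'\subset U$) satisfies $\zeta(W_{0})\cap W_{0}\neq\emptyset$ and $\zeta(W_{0})\neq W_{0}$ --- exactly what your nesting argument produces. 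No deck transformation ever needs to be tracked, because the criterion is about the full $G$-invariant configuration $\mathscr{U}_{\hat{\gamma}}$ rather than a single marked lift $\partial W_{0}$. I would also caution that your stated dictionary, ``$[\Phi]^{*}$ fixes $\gamma$ exactly when $\Phi(\partial W_{0})=h_{x}(\partial W_{0})$,'' is not obviously correct as written: a single geodesic in $\mathbf{D}$ (the universal cover of $\tilde{S}$, not of $S$) does not encode $\gamma$; the curve on the punctured surface is encoded by the whole collection $\mathscr{U}_{\hat{\gamma}}$ together with $\Omega_{\hat{\gamma}}$, which is why the paper's criterion is phrased in those terms. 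To repair your proof, replace the $h_{x}$-reduction by an appeal to Lemma~4.1 of \cite{CZ2} and conclude via the mutual disjointness of maximal elements.
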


\begin{proof}
Suppose that 
\begin{equation}\label{REDU}
[\tau_{\hat{a}}^r\tau_k^{-s}]^*(\gamma)=t_{a}^r\circ t_{k}^{-s}(\gamma)=\gamma.
\end{equation}
By Lemma 4.1 of \cite{CZ2}, $\tau_{\hat{a}}^r\tau_{k}^{-s}$  sends every maximal element of $\mathscr{U}_{\hat{\gamma}}$ to a maximal element of $\mathscr{U}_{\hat{\gamma}}$. This tells us that 
\begin{equation}\label{REDU0}
\tau_{\hat{a}}^r\tau_k^{-s}(W)=W_1, \ \ \mbox{where} \ W, W_1 \ \mbox{are maximal elements of}\  \mathscr{U}_{\hat{\gamma}}.
\end{equation}
From (\ref{REDU}), $t_{\tilde{a}}^r\circ t_{\tilde{a}}^{-s}(\tilde{ \gamma})=\tilde{\gamma}$. We have $t_{\tilde{a}}^{r-s}(\tilde{ \gamma})=\tilde{\gamma}$ for $r\neq s$.  So $\tilde{\gamma}$ must be disjoint from $\tilde{a}$. It follows that the sets $\{\varrho^{-1}(\tilde{\gamma})\}$ and $\{\varrho^{-1}(\tilde{a})\}$ are disjoint. We conclude that all boundary geodesics of maximal elements of $\mathscr{U}_{\hat{\gamma}}$ must be disjoint from $\hat{a}$ and $\hat{b}_k$. 

By hypothesis, $\partial W$ intersects $\hat{c}$. Thus Lemma \ref{L1} says that there is a maximal element $W_0\in \mathscr{U}_{\hat{\gamma}}$ such that $W_0'\subset U$ or $W_0'\subset U_k$. 

If $W_0'\subset U_k$, then since $\tilde{\gamma}$ is simple, the region  $\tau_{k}^{-1}(W_0')\subset U_k$ is disjoint from $W_0'$. So $\tau_{k}^{-s}(W_0')$ is near to the point $B$ ($s$ does not depend on $W_0'$ since $W_0'$ is away from the point $A$). Thus the region $\tau_{\hat{a}}^r\tau_k^{-s}(W_0')\subset U$ is near to the point $F$. Hence one of the following conditions holds (with $\zeta=\tau_{\hat{a}}^{r}\tau_{k}^{-s}$): 

(i) either  $\zeta(W_0')\subset W_0'$ and $W_0'\neq \zeta(W_0')$, or\\
\indent (ii) $\zeta(W_0')$ and $W_0'$ are disjoint.

\noindent In both cases, we have  
$W_0\cap  \tau_{\hat{a}}^{r}\tau_{k}^{-s}(W_0)\neq \emptyset$ and $W_0\neq \tau_{\hat{a}}^{r}\tau_{k}^{-s}(W_0)$. So 
$\tau_{\hat{a}}^{r}\tau_{k}^{-s}(W_0)$ is not a maximal element of $\mathscr{U}_{\hat{\gamma}}$.  This contradicts (\ref{REDU0}).

If $W_0'\subset U$, we consider the inverse map $\tau_{k}^{s}\tau_{\hat{a}}^{-r}$ of $\tau_{\hat{a}}^{r}\tau_{k}^{-s}$. Observe that $\tau_{\hat{a}}^{-r}(W_0')\subset U$ is near to the point $E$, and thus  $\tau_{k}^{s}\tau_{\hat{a}}^{-r}(W_0')\subset U_k$ is near to the point $A$. This implies that (i) and (ii) above remain valid with $\zeta=\tau_{k}^{s}\tau_{\hat{a}}^{-r}$. 
It follows that $\tau_{k}^{s}\tau_{\hat{a}}^{-r}(W_0)$ is not a maximal element of $\mathscr{U}_{\hat{\gamma}}$. This also contradicts (\ref{REDU0}). 
\end{proof}
\begin{lem}\label{L3}
Assume that $k\geq 3$. Then $\varepsilon_k>T_g$ and for all sufficiently large integers $r,s$,  $[\tau_{\hat{a}}^r\tau_k^{-s}]^*$ are pseudo-Anosov mapping classes. 
\end{lem}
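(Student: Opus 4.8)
The plan is to establish the two assertions separately: the width bound by a one-variable length computation, and the pseudo-Anosov conclusion by eliminating the periodic and reducible cases of the Nielsen--Thurston trichotomy.

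For the width, parametrize the axis $\hat c$ by arclength with the repelling fixed point of $g$ at $-\infty$ and the attracting one at $+\infty$, and let $\hat a=\partial U$ and $\hat b=\partial U_0$ meet $\hat c$ at parameters $\alpha$ and $\beta$. Since $g$ fixes the repelling endpoint (which $U_0$ covers) and translates $\hat c$ by $T_g$ toward $+\infty$, the geodesic $\hat b_k=g^k(\hat b)$ meets $\hat c$ at $\beta+kT_g$ while $U_k=g^k(U_0)$ still covers the repelling fixed point. For $k\ge2$, $\hat a$ and $\hat b_k$ are disjoint by \eqref{IMP}, so along $\hat c$ the overlap $\mathscr{D}_k=U\cap U_k$ is exactly the interval from $\alpha$ to $\beta+kT_g$; hence $$\varepsilon_k=\beta+kT_g-\alpha=\varepsilon_1+(k-1)T_g.$$ As $\varepsilon_1\ge0$ (with $\varepsilon_1=0$ exactly in the tessellating case $g(U_0)=\mathbf{D}\setminus\bar{U}$), for $k\ge3$ we obtain $\varepsilon_k\ge2T_g>T_g$. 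This also explains the threshold: for $k=2$ only $\varepsilon_2\ge T_g$ is available, with equality possible.

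Now fix $r\ne s$ large and put $\phi=[\tau_{\hat a}^r\tau_k^{-s}]^*=t_a^r\circ t_k^{-s}$ and $\zeta=\tau_{\hat a}^r\tau_k^{-s}$. By the Nielsen--Thurston classification it suffices to show $\phi$ is neither periodic nor reducible, i.e. that no power $\phi^n$ ($n\ge1$) fixes the class of an essential simple closed geodesic $\gamma$. Non-periodicity is immediate: since $\tilde b_k=\tilde a$, the class $\phi$ projects on $\tilde S$ to $t_{\tilde a}^{\,r-s}$, of infinite order when $r\ne s$. Suppose then $\phi^n(\gamma)=\gamma$ for some essential $\gamma$ and $n\ge1$. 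Projecting to $\tilde S$ gives $t_{\tilde a}^{\,n(r-s)}(\tilde\gamma)=\tilde\gamma$ with $n(r-s)\ne0$, so $i(\tilde a,\tilde\gamma)=0$ and every lift of $\tilde\gamma$ is disjoint from $\hat a$ and from each $\hat b_k$. On the other hand, because $\gamma$ is essential on $S$ and the filling geodesic $\tilde c$ passes through $x$, the curve $\tilde c$ must cross $\gamma$; hence some lift $\partial W$ of a maximal $W\in\mathscr{U}_{\hat\gamma}$ crosses $\hat c$. (The exceptional $\gamma$ that bound a twice-punctured disk containing $x$, for which $\tilde\gamma$ is peripheral, are disposed of separately, again using that $\tilde c$ meets $\gamma$ through $x$.) Thus Lemmas \ref{L1} and \ref{L2} are applicable.

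It remains to upgrade the one-step estimate of Lemma \ref{L2} to the iterate $\zeta^n$. Since $\phi^n(\gamma)=\gamma$, Lemma 4.1 of \cite{CZ2} shows $\zeta^n$ permutes the maximal elements of $\mathscr{U}_{\hat\gamma}$. Choose, via Lemma \ref{L1}, a maximal $W_0$ with $W_0'\subset U_k$ (the case $W_0'\subset U$ being symmetric under $\zeta\mapsto\zeta^{-1}$). The computation in Lemma \ref{L2} shows that $\zeta$ carries $W_0'$ strictly toward the endpoint $F$ inside $U$ and, crucially, keeps the image away from the exceptional points $A,E$ where the bookkeeping would fail. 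I would then prove by induction on $n$ that this advance persists, so that $\{\zeta^n(W_0')\}$ is a strictly monotone family that never returns to $W_0'$; consequently $\zeta^n(W_0)\ne W_0$ while $\zeta^n(W_0)\cap W_0\ne\emptyset$, so $\zeta^n(W_0)$ is not maximal, contradicting the permutation property. The main obstacle is exactly this induction: one must check that each application of $\zeta$ advances the region by a definite, $n$-independent amount, which is where the strict inequality $\varepsilon_k>T_g$ (forcing $k\ge3$) and the largeness of $r,s$ are indispensable --- for $k=2$ the borderline $\varepsilon_2=T_g$ destroys the uniform contraction.
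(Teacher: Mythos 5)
Your width computation is correct and is exactly the mechanism the paper uses (it appears there as Lemma \ref{L51}: $\varepsilon_2=\varepsilon_1+T_g$, whence $\varepsilon_k=\varepsilon_1+(k-1)T_g\ge 2T_g>T_g$ for $k\ge 3$), and your overall strategy --- rule out periodic via the projection $t_{\tilde a}^{\,r-s}$, rule out reducible via the half-plane machinery of Lemmas \ref{A1} and \ref{L2} --- is also the paper's. But there is a genuine gap at the pivotal step. You write that since $\tilde c$ is filling it must cross $\tilde\gamma$, ``hence some lift $\partial W$ of a maximal $W\in\mathscr{U}_{\hat\gamma}$ crosses $\hat c$.'' The filling property only guarantees that $\hat c$ crosses \emph{some} geodesic in $\varrho^{-1}(\tilde\gamma)$; that geodesic may bound a non-maximal element sitting inside a maximal half-plane $W$ which contains the whole axis $\hat c$. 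In that case no maximal boundary meets $\hat c$, Lemmas \ref{L1} and \ref{L2} are not applicable, and your contradiction never gets off the ground. This is not a corner case to be waved away: the entire second half of the paper's proof is devoted to it. There, assuming $\hat c\subset W$, one observes that $\partial W$ is disjoint from $\hat a$, $\hat b_k$ and $\hat c$, so $W'\cap\mathbf{S}^1$ lies in one of the six arcs of $\mathbf{S}^1\setminus\{A,B,E,F,X,Y\}$, and a separate diameter/disjointness argument in each case (using Lemma 4.3 of \cite{CZ2}) shows $\tau_k^{s}\tau_{\hat a}^{-r}(W)$ or $\tau_k^{-s}\tau_{\hat a}^{r}(W)$ cannot be an element of $\mathscr{U}_{\hat\gamma}$.

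Two smaller deficits: (1) the reducing curves bounding a twice-punctured disk about $x$ cannot be ``disposed of using that $\tilde c$ meets $\gamma$ through $x$'' --- for such $\gamma$ the curve $\tilde\gamma$ is trivial and the half-plane apparatus for $\gamma$ does not exist; the paper instead invokes Lemmas 5.1, 5.2 and 3.3 of \cite{CZ1}, which show the boundary map $\tau_{\hat a}^{r}\tau_k^{-s}|_{\partial\mathbf{D}}$ would have to fix a parabolic fixed point of $G$ and that this fails for large $r,s$. (2) Your induction on the iterates $\zeta^n$ is left explicitly unproven (``I would then prove by induction\ldots''); you have correctly identified that passing from $\phi(\gamma)=\gamma$ to $\phi^n(\gamma)=\gamma$ requires an argument (the paper glosses this with ``by taking a suitable power if necessary''), but identifying an obstacle is not the same as overcoming it, so as written the reducible case is not closed even when a maximal boundary does cross $\hat c$.
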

\begin{proof}
Suppose that $[\tau_{\hat{a}}^r\tau_k^{-s}]^*$ is not pseudo-Anosov for some $k\geq 3$ and some large positive integers $r$ and $s$. By Bers \cite{Bers2}, it is either elliptic or reducible. From (\ref{EQUA}), $[\tau_{\hat{a}}^r\tau_k^{-s}]^*$ cannot be elliptic. Hence $[\tau_{\hat{a}}^r\tau_k^{-s}]^*$ must be a reducible mapping class. 

Let $\mathscr{C}=\{c_1,\ldots, c_{q}\}$, $q\geq 1$, be the corresponding curve simplex reduced by a representative of $[\tau_{\hat{a}}^r\tau_k^{-s}]^*$ (the curve simplex depends on $r,s$ and $k$). If $\mathscr{C}$ contains a curve $\gamma$ that bounds a twice punctured disk enclosing the puncture $x$, then such a $\gamma$ is unique in $\mathscr{C}$ (since any two twice punctured disks $\Delta, \Delta'$ must intersect if both $\Delta$ and $\Delta'$ enclose $x$).  From Lemma 5.1 and Lemma 5.2 of \cite{CZ1},  the restriction 
$\tau_{\hat{a}}^r\tau_{k}^{-s}\left|_{\partial \mathbf{D}}\right.$ fixes some parabolic fixed point of $G$. But Lemma 3.3 of \cite{CZ1} says that for sufficiently large integers $r$ and $s$, this does not occur. 

Suppose that $\mathscr{C}$ does not contain any curve $\gamma$ which is the boundary of a twice punctured disk enclosing $x$. This means that  $\mathscr{C}$ does not contain any curve  $\gamma$ with $\tilde{\gamma}$ being trivial (where $\tilde{\gamma}$ denotes the geodesic homotopic to $\gamma$ on $\tilde{S}$ ). Then we may assume (by taking a suitable power if necessary) that for a $\gamma\in \mathscr{C}$ with $\tilde{\gamma}$ being non-trivial, we have $t_{a}^r\circ t_{k}^{-s}(\gamma)=\gamma$. By Lemma \ref{L2}, $\hat{c}$ does not cross any element of $\mathscr{U}_{\hat{\gamma}}$. So either $\hat{c}\subset \Omega_{\hat{a}}$ or $\hat{c}\subset W$ for a maximal element $W$ of $\mathscr{U}_{\hat{\gamma}}$. 

If $\hat{c}\subset \Omega_{\hat{a}}$, $g$ commutes with $\tau_{\hat{\gamma}}$. So $f$ commutes with $t_{\gamma}$. This is impossible. Alternatively, the condition $\hat{c}\subset \Omega_{\hat{a}}$ leads to that $\varrho(\hat{c})$ is disjoint from $\tilde{a}$, contradicting that $\varrho(\hat{c})$ is a filling curve. 

It remains to consider the case that $\hat{c}\subset W$ for some $W\in \mathscr{U}_{\hat{\gamma}}$. In the following discussion we denote by $(XY)$ the minor arc in $\mathbf{S}$ connecting two points $X$ and $Y$ on $\mathbf{S}$.  
Notice that the boundary geodesic $\partial W$ of $W$ is disjoint from $\hat{a}$ and $\hat{b}_k$. We see that $\partial W$
is disjoint from $\hat{c}, \hat{a}$ and $\hat{b}_k$. Hence the arc $W'\cap \mathbf{S}^1$, where $W'=\mathbf{D}\backslash \bar{W}$, is a subarc of one of the six arc components of $\mathbf{S}^1-\{A,B,E,F,X,Y\}$. See Figure 1 (b) for these labeling points. 

Consider the case that $W'\cap \mathbf{S}^1\subset (AE)$. Since $\hat{c}\subset W$, $W'$ is disjoint from $U_k'$, $U'$ and $\hat{c}$. Note also that the Euclidean diameter of $\tau_{k}^{s}\tau_{\hat{a}}^{-r}(W')$ is strictly smaller than that of $W'$. Here $r$ and $s$ depend on $U$, $U_k$, $\hat{c}$, and does not depend on a particular half-plane $H$ with $H\cap \mathbf{S}^1\subset (AE)$ (since $H\cap \mathbf{S}^1$ is away from the points $B$ and $F$). 

We see that $\tau_{k}^{s}\tau_{\hat{\alpha}}^{-r}(W')\neq W'$. From Lemma 4.3 of \cite{CZ2}, $\tau_{k}^{s}\tau_{\hat{a}}^{-r}(W)\cap W\neq \emptyset$. Clearly, $W\neq \tau_{k}^{s}\tau_{\hat{a}}^{-r}(W)$. It follows that $\tau_{k}^{s}\tau_{\hat{a}}^{-r}$ and hence $\tau_{\hat{a}}^{r}\tau_{k}^{-s}$ does not send $W$ to any element of $\mathscr{U}_{\hat{\gamma}}$. If $W'\cap \mathbf{S}^1\subset (BF)$, then the same argument as above shows that $\tau_{\hat{a}}^{r}\tau_{k}^{-s}(W)$ is not an element of $\mathscr{U}_{\hat{\gamma}}$. 

When $W'\cap \mathbf{S}^1\subset (BY)\cup (YA)$, then since $\partial W$ projects to a simple closed geodesic on $\tilde{S}$, $\tau_{k}^{-s}\tau_{\hat{a}}^{r}(W')$ is disjoint from $W'$. Hence $\tau_{k}^{-s}\tau_{\hat{a}}^{r}(W)$ is not an element of $\mathscr{U}_{\hat{\gamma}}$. Similar discussion also applies to the case when $W'\cap \mathbf{S}^1\subset (EX)\cup (XF)$.  Details are omitted. 
\end{proof}

\section{Proof of Theorem 1.1}
\setcounter{equation}{0}

We first assume that $\hat{c}\cap \Omega_{\hat{a}}\neq \emptyset$. In this case, by Lemma \ref{L3}, the mapping classes $[\tau_{\hat{a}}^r\tau_k^{-s}]^*\in \mbox{Mod}_S^x$ are pseudo-Anosov when $k\geq 3$ and $r,s$ are sufficiently large positive integers with $r\neq s$ ($r,s$ depend on $k$).

Now we consider the case that $\hat{c}\cap \Omega_{\hat{a}}=\emptyset$. This means that $\hat{c}$ lies in a maximal element $U$ of $\mathscr{U}_{\hat{a}}$.  Denote by $\hat{a}=\partial U$. Let $Y$ and $X$ be the attracting and repelling  fixed points of $g$. See Figure 2. Since $\tilde{a}$ is a simple closed geodesic on $\tilde{S}$, for $k\geq 1$, $g^k(U')\cap U'=\emptyset$. More precisely, $g^k(U')\cap \mathbf{S}^1$ is a subarc contained in $(YE)$. 

In Figure 2, $g^k(U')$ is shown as the region $U_k'$. Let $U_k=\mathbf{D}\backslash U_k'$. Then  $g^k(U)=U_k$. We see that $U_k\cap U\neq \emptyset$, $U_k\cup U=\mathbf{D}$ and $\hat{b}_k \cap \hat{a}=\emptyset$, where $\hat{b}_k=\partial U_k$.  Following the notation introduced in Lemma \ref{L3},  we can similarly define $\tau_k$ as in (\ref{MAP}) and claim that for all $k\geq 1$, all mapping classes $[\tau_{\hat{a}}^r\tau_k^{-s}]^*$ are pseudo-Anosov when $r$ and $s$ are sufficiently large. Indeed, if $\hat{c}$ is contained in an element $W$ of $\mathscr{U}_{\hat{\gamma}}$, we can use the same argument as in the proof of Lemma \ref{L3} to conclude that this does not occur. 

We now assume that $\hat{c}$ crosses a maximal element $W$ of $\mathscr{U}_{\hat{\gamma}}$, as shown in Figure 2. Notice that $\tilde{\gamma}=\varrho(\hat{\gamma})$ is disjoint from $\tilde{a}$ and that $\partial U_k$ is a geodesic in $\{\varrho^{-1}(\tilde{a})\}$. We see that $\partial W$ must be  disjoint from $\hat{a}$ and $\partial U_k$. By Lemma \ref{A1}, $g(\mathbf{D}\backslash W)\subset W_0$, where $W_0\in \mathscr{U}_{\hat{\gamma}}$ is another maximal element disjoint from $W$. 

\bigskip
\medskip

\unitlength 1mm 
\linethickness{0.4pt}
\ifx\plotpoint\undefined\newsavebox{\plotpoint}\fi 
\begin{picture}(90,70)(0,0)
\put(89.829,43.25){\line(0,1){1.0848}}
\put(89.806,44.335){\line(0,1){1.0828}}
\put(89.736,45.418){\line(0,1){1.0789}}
\multiput(89.62,46.497)(-.032419,.21458){5}{\line(0,1){.21458}}
\multiput(89.458,47.569)(-.0297,.152139){7}{\line(0,1){.152139}}
\multiput(89.25,48.634)(-.031665,.131886){8}{\line(0,1){.131886}}
\multiput(88.997,49.689)(-.033142,.115919){9}{\line(0,1){.115919}}
\multiput(88.699,50.733)(-.031153,.093595){11}{\line(0,1){.093595}}
\multiput(88.356,51.762)(-.032205,.084493){12}{\line(0,1){.084493}}
\multiput(87.969,52.776)(-.0330413,.076649){13}{\line(0,1){.076649}}
\multiput(87.54,53.773)(-.0337014,.0697947){14}{\line(0,1){.0697947}}
\multiput(87.068,54.75)(-.0320772,.0597514){16}{\line(0,1){.0597514}}
\multiput(86.555,55.706)(-.0325712,.0548919){17}{\line(0,1){.0548919}}
\multiput(86.001,56.639)(-.0329538,.0504773){18}{\line(0,1){.0504773}}
\multiput(85.408,57.548)(-.0332389,.0464396){19}{\line(0,1){.0464396}}
\multiput(84.776,58.43)(-.0334375,.0427248){20}{\line(0,1){.0427248}}
\multiput(84.108,59.284)(-.0335588,.039289){21}{\line(0,1){.039289}}
\multiput(83.403,60.109)(-.0336102,.0360968){22}{\line(0,1){.0360968}}
\multiput(82.664,60.904)(-.0335982,.0331187){23}{\line(-1,0){.0335982}}
\multiput(81.891,61.665)(-.036576,.033088){22}{\line(-1,0){.036576}}
\multiput(81.086,62.393)(-.0397672,.0329907){21}{\line(-1,0){.0397672}}
\multiput(80.251,63.086)(-.0432009,.03282){20}{\line(-1,0){.0432009}}
\multiput(79.387,63.742)(-.0469125,.0325681){19}{\line(-1,0){.0469125}}
\multiput(78.496,64.361)(-.0509457,.032225){18}{\line(-1,0){.0509457}}
\multiput(77.579,64.941)(-.0553544,.031779){17}{\line(-1,0){.0553544}}
\multiput(76.638,65.482)(-.0642199,.0332962){15}{\line(-1,0){.0642199}}
\multiput(75.674,65.981)(-.0702718,.0326948){14}{\line(-1,0){.0702718}}
\multiput(74.69,66.439)(-.0771159,.0319363){13}{\line(-1,0){.0771159}}
\multiput(73.688,66.854)(-.084947,.030988){12}{\line(-1,0){.084947}}
\multiput(72.669,67.226)(-.103436,.032785){10}{\line(-1,0){.103436}}
\multiput(71.634,67.554)(-.116384,.031473){9}{\line(-1,0){.116384}}
\multiput(70.587,67.837)(-.132328,.029766){8}{\line(-1,0){.132328}}
\multiput(69.528,68.075)(-.177975,.032095){6}{\line(-1,0){.177975}}
\multiput(68.46,68.268)(-.215024,.029332){5}{\line(-1,0){.215024}}
\put(67.385,68.414){\line(-1,0){1.0804}}
\put(66.305,68.515){\line(-1,0){1.0837}}
\put(65.221,68.569){\line(-1,0){1.085}}
\put(64.136,68.576){\line(-1,0){1.0844}}
\put(63.052,68.538){\line(-1,0){1.0817}}
\multiput(61.97,68.452)(-.26927,-.03287){4}{\line(-1,0){.26927}}
\multiput(60.893,68.321)(-.17841,-.029583){6}{\line(-1,0){.17841}}
\multiput(59.822,68.143)(-.151696,-.031883){7}{\line(-1,0){.151696}}
\multiput(58.76,67.92)(-.131418,-.033557){8}{\line(-1,0){.131418}}
\multiput(57.709,67.652)(-.103888,-.031324){10}{\line(-1,0){.103888}}
\multiput(56.67,67.339)(-.093137,-.032495){11}{\line(-1,0){.093137}}
\multiput(55.646,66.981)(-.084022,-.033416){12}{\line(-1,0){.084022}}
\multiput(54.637,66.58)(-.0707258,-.0317009){14}{\line(-1,0){.0707258}}
\multiput(53.647,66.136)(-.064683,-.0323875){15}{\line(-1,0){.064683}}
\multiput(52.677,65.65)(-.0592842,-.0329326){16}{\line(-1,0){.0592842}}
\multiput(51.729,65.124)(-.0544182,-.0333567){17}{\line(-1,0){.0544182}}
\multiput(50.803,64.556)(-.0499985,-.0336759){18}{\line(-1,0){.0499985}}
\multiput(49.903,63.95)(-.0436593,-.0322077){20}{\line(-1,0){.0436593}}
\multiput(49.03,63.306)(-.0402284,-.0324267){21}{\line(-1,0){.0402284}}
\multiput(48.185,62.625)(-.0370389,-.032569){22}{\line(-1,0){.0370389}}
\multiput(47.371,61.909)(-.0340617,-.0326418){23}{\line(-1,0){.0340617}}
\multiput(46.587,61.158)(-.0326325,-.0340707){23}{\line(0,-1){.0340707}}
\multiput(45.837,60.374)(-.0325589,-.0370478){22}{\line(0,-1){.0370478}}
\multiput(45.12,59.559)(-.0324157,-.0402372){21}{\line(0,-1){.0402372}}
\multiput(44.44,58.714)(-.0321958,-.0436681){20}{\line(0,-1){.0436681}}
\multiput(43.796,57.841)(-.0336622,-.0500077){18}{\line(0,-1){.0500077}}
\multiput(43.19,56.941)(-.0333418,-.0544273){17}{\line(0,-1){.0544273}}
\multiput(42.623,56.015)(-.0329164,-.0592932){16}{\line(0,-1){.0592932}}
\multiput(42.096,55.067)(-.0323699,-.0646918){15}{\line(0,-1){.0646918}}
\multiput(41.611,54.096)(-.0316816,-.0707345){14}{\line(0,-1){.0707345}}
\multiput(41.167,53.106)(-.033393,-.084031){12}{\line(0,-1){.084031}}
\multiput(40.766,52.098)(-.03247,-.093146){11}{\line(0,-1){.093146}}
\multiput(40.409,51.073)(-.031296,-.103896){10}{\line(0,-1){.103896}}
\multiput(40.096,50.034)(-.033521,-.131427){8}{\line(0,-1){.131427}}
\multiput(39.828,48.983)(-.031842,-.151705){7}{\line(0,-1){.151705}}
\multiput(39.605,47.921)(-.029534,-.178418){6}{\line(0,-1){.178418}}
\multiput(39.428,46.85)(-.0328,-.26928){4}{\line(0,-1){.26928}}
\put(39.297,45.773){\line(0,-1){1.0817}}
\put(39.212,44.691){\line(0,-1){2.1694}}
\put(39.181,42.522){\line(0,-1){1.0837}}
\put(39.236,41.438){\line(0,-1){1.0804}}
\multiput(39.337,40.358)(.029391,-.215016){5}{\line(0,-1){.215016}}
\multiput(39.484,39.283)(.032144,-.177966){6}{\line(0,-1){.177966}}
\multiput(39.676,38.215)(.029803,-.13232){8}{\line(0,-1){.13232}}
\multiput(39.915,37.157)(.031504,-.116375){9}{\line(0,-1){.116375}}
\multiput(40.198,36.109)(.032814,-.103427){10}{\line(0,-1){.103427}}
\multiput(40.526,35.075)(.031011,-.084939){12}{\line(0,-1){.084939}}
\multiput(40.899,34.056)(.0319574,-.0771072){13}{\line(0,-1){.0771072}}
\multiput(41.314,33.053)(.032714,-.0702629){14}{\line(0,-1){.0702629}}
\multiput(41.772,32.07)(.0333138,-.0642108){15}{\line(0,-1){.0642108}}
\multiput(42.272,31.106)(.0317941,-.0553457){17}{\line(0,-1){.0553457}}
\multiput(42.812,30.165)(.0322389,-.0509369){18}{\line(0,-1){.0509369}}
\multiput(43.393,29.249)(.0325809,-.0469036){19}{\line(0,-1){.0469036}}
\multiput(44.012,28.357)(.0328318,-.0431919){20}{\line(0,-1){.0431919}}
\multiput(44.668,27.494)(.0330015,-.0397582){21}{\line(0,-1){.0397582}}
\multiput(45.361,26.659)(.033098,-.036567){22}{\line(0,-1){.036567}}
\multiput(46.089,25.854)(.0331279,-.0335891){23}{\line(0,-1){.0335891}}
\multiput(46.851,25.082)(.0361059,-.0336003){22}{\line(1,0){.0361059}}
\multiput(47.646,24.342)(.0392982,-.033548){21}{\line(1,0){.0392982}}
\multiput(48.471,23.638)(.0427339,-.0334258){20}{\line(1,0){.0427339}}
\multiput(49.326,22.969)(.0464487,-.0332262){19}{\line(1,0){.0464487}}
\multiput(50.208,22.338)(.0504863,-.03294){18}{\line(1,0){.0504863}}
\multiput(51.117,21.745)(.0549008,-.0325562){17}{\line(1,0){.0549008}}
\multiput(52.05,21.192)(.0597601,-.0320609){16}{\line(1,0){.0597601}}
\multiput(53.006,20.679)(.0698039,-.0336823){14}{\line(1,0){.0698039}}
\multiput(53.984,20.207)(.076658,-.0330203){13}{\line(1,0){.076658}}
\multiput(54.98,19.778)(.084502,-.032182){12}{\line(1,0){.084502}}
\multiput(55.994,19.392)(.093603,-.031128){11}{\line(1,0){.093603}}
\multiput(57.024,19.049)(.115928,-.03311){9}{\line(1,0){.115928}}
\multiput(58.067,18.751)(.131895,-.031629){8}{\line(1,0){.131895}}
\multiput(59.122,18.498)(.152147,-.029658){7}{\line(1,0){.152147}}
\multiput(60.187,18.291)(.214589,-.032361){5}{\line(1,0){.214589}}
\put(61.26,18.129){\line(1,0){1.0789}}
\put(62.339,18.013){\line(1,0){1.0829}}
\put(63.422,17.944){\line(1,0){2.1697}}
\put(65.592,17.944){\line(1,0){1.0828}}
\put(66.675,18.014){\line(1,0){1.0788}}
\multiput(67.753,18.131)(.214571,.032478){5}{\line(1,0){.214571}}
\multiput(68.826,18.293)(.152131,.029741){7}{\line(1,0){.152131}}
\multiput(69.891,18.501)(.131878,.031701){8}{\line(1,0){.131878}}
\multiput(70.946,18.755)(.11591,.033173){9}{\line(1,0){.11591}}
\multiput(71.989,19.053)(.093586,.031179){11}{\line(1,0){.093586}}
\multiput(73.019,19.396)(.084484,.032229){12}{\line(1,0){.084484}}
\multiput(74.033,19.783)(.07664,.0330622){13}{\line(1,0){.07664}}
\multiput(75.029,20.213)(.0697855,.0337204){14}{\line(1,0){.0697855}}
\multiput(76.006,20.685)(.0597426,.0320935){16}{\line(1,0){.0597426}}
\multiput(76.962,21.199)(.054883,.0325862){17}{\line(1,0){.054883}}
\multiput(77.895,21.753)(.0504683,.0329676){18}{\line(1,0){.0504683}}
\multiput(78.803,22.346)(.0464306,.0332516){19}{\line(1,0){.0464306}}
\multiput(79.685,22.978)(.0427156,.0334492){20}{\line(1,0){.0427156}}
\multiput(80.54,23.647)(.0392798,.0335695){21}{\line(1,0){.0392798}}
\multiput(81.365,24.352)(.0360876,.0336201){22}{\line(1,0){.0360876}}
\multiput(82.159,25.091)(.0331096,.0336072){23}{\line(0,1){.0336072}}
\multiput(82.92,25.864)(.033078,.0365851){22}{\line(0,1){.0365851}}
\multiput(83.648,26.669)(.0329798,.0397762){21}{\line(0,1){.0397762}}
\multiput(84.34,27.504)(.0328082,.0432098){20}{\line(0,1){.0432098}}
\multiput(84.997,28.369)(.0325553,.0469214){19}{\line(0,1){.0469214}}
\multiput(85.615,29.26)(.0322111,.0509545){18}{\line(0,1){.0509545}}
\multiput(86.195,30.177)(.0317638,.055363){17}{\line(0,1){.055363}}
\multiput(86.735,31.119)(.0332787,.064229){15}{\line(0,1){.064229}}
\multiput(87.234,32.082)(.0326756,.0702808){14}{\line(0,1){.0702808}}
\multiput(87.692,33.066)(.0319152,.0771247){13}{\line(0,1){.0771247}}
\multiput(88.106,34.068)(.030965,.084956){12}{\line(0,1){.084956}}
\multiput(88.478,35.088)(.032757,.103445){10}{\line(0,1){.103445}}
\multiput(88.806,36.122)(.031441,.116392){9}{\line(0,1){.116392}}
\multiput(89.089,37.17)(.02973,.132336){8}{\line(0,1){.132336}}
\multiput(89.326,38.229)(.032047,.177984){6}{\line(0,1){.177984}}
\multiput(89.519,39.297)(.029273,.215032){5}{\line(0,1){.215032}}
\put(89.665,40.372){\line(0,1){1.0804}}
\put(89.765,41.452){\line(0,1){1.7979}}
\qbezier(87,54.5)(70.625,43.875)(86.75,31.75)
\qbezier(41,34)(62.625,40.375)(64.75,18.25)
\qbezier(39.75,48.5)(59.75,46.875)(57.75,67.75)
\qbezier(48.75,63)(63.375,50.5)(72.5,67)
\put(83.75,48.75){\makebox(0,0)[cc]{$\hat{a}$}}
\put(85.75,35.75){\makebox(0,0)[cc]{$U'$}}
\put(83,30.75){\makebox(0,0)[cc]{$U$}}
\put(67.25,20.75){\makebox(0,0)[cc]{$U_k$}}
\put(61,21.25){\makebox(0,0)[cc]{$U_k'$}}
\put(48.5,32){\makebox(0,0)[cc]{$\hat{b}_k$}}
\put(52.75,47.75){\makebox(0,0)[cc]{$\hat{c}$}}
\put(67.5,65.25){\makebox(0,0)[cc]{$W$}}
\put(90,56){\makebox(0,0)[cc]{$F$}}
\put(89.5,30){\makebox(0,0)[cc]{$E$}}
\put(65,15){\makebox(0,0)[cc]{$A$}}
\put(38,33.25){\makebox(0,0)[cc]{$B$}}
\put(73.5,70){\makebox(0,0)[cc]{$Q$}}
\put(47,64.75){\makebox(0,0)[cc]{$P$}}
\put(57.5,70){\makebox(0,0)[cc]{$X$}}
\put(37,48.75){\makebox(0,0)[cc]{$Y$}}
\put(64.75,7.75){\makebox(0,0)[cc]{Figure 2}}
\multiput(44.5,49.25)(-.075,-.0333333){30}{\line(-1,0){.075}}
\multiput(42.25,48.25)(.0833333,-.0333333){30}{\line(1,0){.0833333}}
\end{picture}

Write $\partial W\cap \mathbf{S}^1=\{P,Q\}$. Observe that $g^k(U')=U_k'$, we have $g(E)=B$ and $g(F)=A$.  This implies the point $g(P)$ stays in the arc connecting $X$ and $Y$; while $g(Q)$ either lies in the arc connecting $Q$ and $F$, or the arc connecting $E$ and $A$ (the point $g(Q)$ cannot lie in the arc connecting $F$ and $E$. For otherwise, $g(\partial W)$ would intersect $\hat{a}$. This contradicts that $g(\partial W)$ projects to $\tilde{\gamma}$ which is disjoint from $\tilde{a}$). It follows from the same argument of Lemma \ref{L2} and Lemma \ref{L3}  that for $k\geq 1$, the mapping classes $[\tau_{\hat{a}}^r\tau_k^{-s}]^*\in \mbox{Mod}_S^x$ are pseudo-Anosov whenever $r$ and $s$ are sufficiently large positive integers with $r\neq s$. 

We conclude that no matter whether $\hat{c}\cap \Omega_{\hat{a}}\neq \emptyset$ or not, for $k\geq 3$,  $[\tau_{\hat{a}}^r\tau_k^{-s}]^*\in \mbox{Mod}_S^x$ are pseudo-Anosov when $r$ and $s$ are sufficiently large positive integers with $r\neq s$.  From (\ref{EQUA}), this is equivalent to that $t_{a}^r\circ t_k^{-s}$ represents a pseudo-Anosov mapping class on $S$. It follows from Thurston's theorem \cite{Th} that all $(a, b_k)$ for $k\geq 3$ fill $S$. But from (\ref{MAP}) we have 
\begin{equation}\label{CAL}
t_k=[\tau_k]^*=(g^*)^k\circ [\tau_{\hat{a}}]^*\circ (g^*)^{-k}=f^k\circ t_a\circ f^{-k}=t_{f^k(a)}.
\end{equation}
We conclude that $b_k=f^k(a)$. Hence $(a,f^k(a))$ fills $S$ for $k\geq 3$. This proves the first statement of Theorem 1.1.

To prove the second statement of the result, we notice that if $\tilde{c}$ intersects $\tilde{a}$ more than once, then the translation length $T_g$ is larger than the length of the segment of $\hat{c}$ in $\mathbf{D}\backslash \left( U\cup U_0 \right)$. See Figure 1 (a). This means that $\varepsilon_2>T_g$. By Lemma 3.3, the mapping classes $[\tau_{\hat{a}}^r\tau_{2}^{-s}]^*$ are pseudo-Anosov for all large integers $r$ and $s$ with $r\neq s$, i.e., $t_a^r\circ t_2^{-s}$ are pseudo-Anosov. It follows that $(a,b_2)$ fills $S$. But a simple calculation similar to (\ref{CAL}) reveals that $b_2=f^2(a)$. As a consequence, $(a,f^2(a))$ fills $S$. Now by a similar argument, $(a,f^m(a))$ also fills $S$ for all $m\geq 2$.  

The last statement of the theorem is the restatement of Theorem 1.1 of \cite{CZ3}. \ \ \ \ \ \ \ \ \ \ \ \ \ \ \ \ \ \ \ \ \ \ \ \ \ \ \ \ \ \ \ \ \ \ \ \ \ \ \ \ \ $\Box$

\section{Proof of Theorem 1.2 and Corollary 1.1}
\setcounter{equation}{0} 
We continue to write $f=g^*$, where we recall that $g\in G$ is an essential hyperbolic element with axis $\hat{c}$.  Let $\varepsilon_k$ be defined as in (\ref{UI}). We refer to Figure 1 (a). 
\begin{lem}\label{L51}
$\varepsilon_2\geq T_g$, and $\varepsilon_2= T_g$ if and only if $\varepsilon_1=0$. 
\end{lem}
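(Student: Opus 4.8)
The plan is to transfer the whole computation to the one-dimensional geometry of the axis $\hat{c}$, exploiting the fact that $g$ acts on $\hat{c}$ as a translation by exactly $T_g$. First I would parametrize $\hat{c}=\hat{c}(t)$ by hyperbolic arclength, with $\hat{c}(-\infty)=X$ the repelling and $\hat{c}(+\infty)=Y$ the attracting fixed point of $g$, so that $g(\hat{c}(t))=\hat{c}(t+T_g)$. The geodesic $\hat{a}=\partial U$ meets $\hat{c}$ in a single point $c_a=\hat{c}(s_a)$, and $\hat{b}=\partial U_0$ meets it in a single point $c_b=\hat{c}(s_b)$; consequently $\hat{b}_k=g^k(\hat{b})=\partial U_k$ meets $\hat{c}$ at $g^k(c_b)=\hat{c}(s_b+kT_g)$.

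Next I would read off $\varepsilon_k$ from this picture. Since $U$ is the half-plane bounded by $\hat{a}$ covering $Y$, the portion of $\hat{c}$ inside $U$ is the sub-ray $\{\hat{c}(t):t>s_a\}$; since $U_k=g^k(U_0)$ still contains the repelling fixed point $X=g^k(X)$, the portion of $\hat{c}$ inside $U_k$ is $\{\hat{c}(t):t<s_b+kT_g\}$. Intersecting, the arc $\hat{c}\cap\mathscr{D}_k=\hat{c}\cap U\cap U_k$ consists of the $\hat{c}(t)$ with $s_a<t<s_b+kT_g$, so by the definition \eqref{UI} of the width,
\begin{equation*}
\varepsilon_k=(s_b+kT_g)-s_a.
\end{equation*}
Thus $\varepsilon_k$ is affine in $k$ with slope $T_g$, and in particular $\varepsilon_2=\varepsilon_1+T_g$.

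The lemma then follows immediately: $\varepsilon_1$ is a hyperbolic length, hence nonnegative, so $\varepsilon_2=\varepsilon_1+T_g\geq T_g$, with equality exactly when $\varepsilon_1=0$. The only point I would be careful to justify is the identity $\varepsilon_2=\varepsilon_1+T_g$ read in coordinate-free terms: $\varepsilon_2$ is the length along $\hat{c}$ from $c_a$ to $g^2(c_b)$, and the intermediate point $g(c_b)$ cuts this into the segment from $c_a$ to $g(c_b)$, of length $\varepsilon_1$, followed by the segment from $g(c_b)$ to $g^2(c_b)$, of length $T_g$. This splitting is valid only if $g(c_b)$ genuinely lies on $[c_a,g^2(c_b)]$, i.e. if $s_a\leq s_b+T_g$, which is precisely the statement $\varepsilon_1\geq 0$; the nonemptiness of the relevant strip $\mathscr{D}_2$ (so that $\varepsilon_2$ is a positive width in the first place) is already recorded in \eqref{IMP}. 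This ordering bookkeeping, together with the degenerate boundary case $\varepsilon_1=0$ noted just before the lemma, is the only subtlety; once it is settled the statement is immediate.
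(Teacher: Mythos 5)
Your proof is correct and follows essentially the same route as the paper, whose entire argument is the identity $\varepsilon_2=\varepsilon_1+T_g$ together with $\varepsilon_1\geq 0$; you simply make explicit the arclength parametrization of $\hat{c}$ and the ordering of the points $c_a$, $g(c_b)$, $g^2(c_b)$ that the paper leaves implicit. No discrepancy to report.
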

\begin{proof}
By the definition, $\varepsilon_2$ is the width of $\mathscr{D}_2=U\cup g^2(U_0)$. We see that  $\varepsilon_2=\varepsilon_1+T_g\geq T_g$, and $\varepsilon_2= T_g$ if and only if $\varepsilon_1=0$. 
\end{proof}
\begin{lem}\label{L52}
$0\leq \varepsilon_1< T_g$, and $\varepsilon_1=0$ if and only if $a$ and $f(a)$ are disjoint and furthermore $\{a,f(a)\}$ forms the boundary of an $x$-punctured cylinder on $S$.  
\end{lem}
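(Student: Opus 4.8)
The plan is to separate the two assertions: the inequality $0\le \varepsilon_1<T_g$, and the characterization of the boundary case $\varepsilon_1=0$. Throughout I work in the configuration of Figure 1~(a), parametrizing the axis $\hat c$ of $g$ by arclength from the repelling fixed point $X$ toward the attracting fixed point $Y$, and I write $t_a$ and $t_b$ for the parameters at which $\hat c$ meets $\hat a=\partial U$ and $\hat b=\partial U_0$. I would settle the inequality by a direct computation along $\hat c$. Since $U$ covers $Y$ and $U_0$ covers $X$, we have $U=\{t>t_a\}$ and $U_0=\{t<t_b\}$, and $g$ translates $\hat c$ toward $Y$ by $T_g$, so $U_1=g(U_0)=\{t<t_b+T_g\}$. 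Hence $\mathscr{D}_1=U\cap U_1$ is the segment $\{t_a<t<t_b+T_g\}$ and $\varepsilon_1=T_g-(t_a-t_b)$. Because $\hat a$ and $\hat b$ are two \emph{distinct} lifts of the simple geodesic $\tilde a$, they are disjoint, and as $\hat c$ meets $\Omega_{\hat{a}}$ in the gap between $U_0$ and $U$ one gets $t_b<t_a$ strictly, giving $\varepsilon_1<T_g$; on the other hand the relation $g(\hat b)\subseteq U$ used to build $\tau_{\hat{a}}$ says $\hat b_1=g(\hat b)$ meets $\hat c$ at $t_b+T_g\ge t_a$, so $t_a-t_b\le T_g$ and $\varepsilon_1\ge 0$. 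This also re-derives $\varepsilon_2=\varepsilon_1+T_g$ from Lemma~\ref{L51}.

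Next I would identify the equality case. The formula $\varepsilon_1=T_g-(t_a-t_b)$ shows $\varepsilon_1=0$ iff $\hat b_1$ and $\hat a$ meet $\hat c$ at the same point; since both project to $\tilde a$ and distinct lifts of a simple geodesic are disjoint, this forces $\hat b_1=\hat a$, equivalently $g(U_0)=\mathbf D\backslash \bar{U}$, equivalently $\{U,g(U_0)\}$ tessellates $\mathbf D$. Applying Lemma~\ref{A1} to $g$ (with $U_0$ covering the repelling fixed point) identifies $g(\mathbf D\backslash U_0)=U$ with the condition that $\tilde c=\varrho(\hat c)$ meets $\tilde a$ exactly once. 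Thus $\varepsilon_1=0$ if and only if $\tilde c$ crosses $\tilde a$ a single time, and it remains to match this with the geometric statement about $a$ and $f(a)$.

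For that last step I would use that $f=f_{\tilde c}$ is the push of $x$ along $\tilde c$. In the forward direction, when $\tilde c$ crosses $\tilde a$ once, a thin annular neighborhood $N$ of $\tilde a$ on $\tilde S$ meets $\tilde c$ in a single subarc carrying $x$, so $P=N\backslash\{x\}$ is an $x$-punctured cylinder with geodesic boundary $\{a,a_0\}$ through which $\tilde c$ passes once; the cylinder fact recalled in the introduction then gives $f_{\tilde c}(a)=a_0$, whence $\{a,f(a)\}=\partial P$ and $a,f(a)$ are disjoint. I would make the disjointness rigorous inside $\mathbf D$ by noting that, once $\hat a=\hat b_1$, the non-identity supports of $\tau_{\hat{a}}$ and of $\tau_1=g\tau_{\hat{a}}g^{-1}$ lie on opposite sides of the common geodesic $\hat a=\hat b_1$, so $t_a=[\tau_{\hat{a}}]^*$ and $t_{f(a)}=[\tau_1]^*$ commute and $a,f(a)$ cannot cross. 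The converse direction reverses this: from a cylinder $P$ with $\partial P=\{a,f(a)\}$ and $x\in P$ one reads off that $\tilde c$ must traverse $P$, hence cross $\tilde a$, exactly once, returning us to $\varepsilon_1=0$.

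The routine part is the computation along $\hat c$ together with the appeal to Lemma~\ref{A1}; the genuine obstacle is the last step. The once-crossing condition is a statement in the universal cover $\mathbf D$ of the \emph{filled} surface $\tilde S$, while ``disjoint and cobounding an $x$-punctured cylinder'' concerns the geodesics $a,f(a)$ on $S=\tilde S\backslash\{x\}$; bridging the two requires pinning down how the single crossing of $\tilde c$ with $\tilde a$ locates $x$ relative to $\tilde a$ and supplying the precise converse of the point-pushing identity $f_{\tilde c}(a)=a_0$. I expect to spend the most care carrying out the support-and-commutation argument for $\tau_{\hat{a}}$ and $g\tau_{\hat{a}}g^{-1}$ entirely within $\mathbf D$, rather than relying on the informal picture on $\tilde S$.
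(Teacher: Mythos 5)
Your handling of the inequality $0\le\varepsilon_1<T_g$ and your reduction of the case $\varepsilon_1=0$ to the tessellation condition $g(U_0)=\mathbf{D}\backslash \bar U$ are sound and essentially agree with the paper (the paper simply notes that $\Omega_{\hat a}$ has non-empty interior to get $\varepsilon_1<T_g$). Your support-and-commutation argument for $\tau_{\hat a}$ and $\tau_1=g\tau_{\hat a}g^{-1}$ does yield that $t_a$ and $t_{f(a)}$ commute, hence that $a$ and $f(a)$ are disjoint; and for the converse the paper argues the contrapositive exactly along the lines you suggest (if $\varepsilon_1>0$ then $\mathscr{D}_1\neq\emptyset$, the two lifted twists do not commute by Lemma 4 of \cite{CZ0}, so $a$ and $f(a)$ intersect), which is cleaner than trying to read the once-crossing property directly off a cylinder.

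The genuine gap is the step you yourself flag: producing the $x$-punctured cylinder with boundary $\{a,f(a)\}$, which is the actual content of the lemma beyond disjointness. Your sketch --- take a thin annular neighborhood $N$ of $\tilde a$ meeting $\tilde c$ in a single subarc ``carrying $x$'' and set $P=N\backslash\{x\}$ --- does not work as stated: the puncture $x$ has no reason to lie in a thin neighborhood of the geodesic $\tilde a$, and the closed geodesic $\tilde c=\varrho(\hat c)$ need not pass through $x$ at all (only the isotopy track is based at $x$). Moreover, many distinct geodesics on $S$ project to the same $\tilde a$ on $\tilde S$, and disjointness alone does not single out the adjacent one that cobounds the punctured cylinder with $a$. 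The paper closes this gap algebraically: letting $h\in G$ be the primitive hyperbolic element stabilizing $\hat a=\partial U$ with the orientation of $\tau_{\hat a}$, one checks that $\tau_{\hat a}\tau_1^{-1}$ coincides with $h$ off $\hat a$, hence on $\partial\mathbf{D}$, so $[\tau_{\hat a}\tau_1^{-1}]^*=h^*$; then Theorem 2 of \cite{Kr} and Theorem 2 of \cite{Na} identify $h^*$ as $t_a\circ t_{a_0}^{-1}$ with $\{a,a_0\}$ the boundary of an $x$-punctured cylinder, and cancelling $t_a$ gives $f(a)=a_0$. Without this input (or an equally precise substitute for the ``converse of the point-pushing identity'' you mention), your argument establishes disjointness but not the cylinder statement.
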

\begin{proof}
It is trivial that $\varepsilon_1\geq 0$. Since $\Omega_{\hat{a}}$ has a non-empty interior, $\varepsilon_1<T_g$. If $\varepsilon_1=0$, then $\{g(U_0),U\}$ tessellates the hyperbolic plane $\mathbf{D}$. By (\ref{MAP}), $\tau_1=g\tau_{\hat{a}}g^{-1}$.
By Lemma 3.2 of \cite{CZ1}, there is a geodesic $\alpha\subset S$ such that 
\begin{equation}\label{J1}
[\tau_1]^*=t_{\alpha}.
\end{equation} 
Let $h\in G$ be the primitive hyperbolic element that keeps $\hat{a}=\partial U$ invariant and takes the same orientation as that of $\tau_{\hat{a}}$. By inspecting the actions of $\tau_{\hat{a}}$ and $\tau_1$ on $\mathbf{D}$, we find that $\tau_{\hat{a}}\tau_1^{-1}$ coincides with $h$ off the geodesic $\hat{a}$. Hence $\tau_{\hat{a}}\tau_1^{-1}|_{\partial \mathbf{D}}=h|_{\partial \mathbf{D}}$. This tells us that 
\begin{equation}\label{J2}
[\tau_{\hat{a}}\tau_1^{-1}]^*=h^*.
\end{equation} 
From Theorem 2 of \cite{Kr} and Theorem 2 of \cite{Na}, $h^*=t_a\circ t_{a_0}^{-1}$, where $\{a,a_0\}$ is the boundary of an $x$-punctured cylinder $P$ on $S$. It follows
from (\ref{J1}) and (\ref{J2}) that 
$$
t_a\circ t_{\alpha}^{-1}=[\tau_{\hat{a}}\tau_1^{-1}]^*=h^*
=t_a\circ t_{a_0}^{-1}. 
$$
So $t_{\alpha}=t_{a_0}$ and hence $\alpha=a_0$. A calculation similar to (\ref{CAL}) yields
$a_0=\alpha=g^*(a)=f(a)$. This proves that $(a,f(a))$ forms the boundary of $P$. 

Conversely, if $\varepsilon_1>0$, then $\mathscr{D}_1\neq \emptyset$. By Lemma 4 of \cite{CZ0}, $\tau_{\hat{a}}$ and $\tau_1$ do not commute. Thus $t_a$ and $t_{\alpha}$ do not commute, which is equivalent to that $a$ and $f(a)$ intersect. In particular, $\{a,f(a)\}$ does not form a boundary of any $x$-punctured cylinder on $S$. 
\end{proof}

\noindent {\em Proof of Theorem $1.2$: }  By assumption, $(a,f^2(a))$ does not fill $S$. By Theorem 1.1 of \cite{CZ3}, $a$ cannot be the boundary component of a twice punctured disk enclosing $x$. That is, $\tilde{a}$ is non-trivial. Let $\tau_{\hat{a}}$ be the lift of $t_{\tilde{a}}$ so that $[\tau_{\hat{a}}]^*=t_a$. From the argument of Theorem 1.1, we also claim that 
$\hat{c}\cap \Omega_{\hat{a}}\neq \emptyset$. We are now in the situation depicted in Figure 1 (a). By Lemma \ref{L51}, $\varepsilon_2\geq T_g$. 

If $\varepsilon_2> T_g$, then by Lemma \ref{L3}, for large $r,s$ with $r\neq s$, the mapping classes $[\tau_{\hat{a}}^r\tau_{2}^{-s}]^*$ are pseudo-Anosov, i.e., $t_a^r\circ t_{2}^{-s}$ are pseudo-Anosov. This implies that $(a,b_2)$ fills $S$. But a calculation similar to (\ref{CAL}) demonstrates that $b_2=f^2(a)$. As a result, $(a,f^2(a))$ fills $S$. This is a contradiction. 

We thus conclude that $\varepsilon_2 = T_g$. Thus $\varepsilon_1 = 0$. Therefore, by Lemma \ref{L52}, $a$ and $f(a)$ are disjoint and $\{a, f(a)\}$ forms the boundary of an $x$-punctured cylinder $P$ on $S$. Note that $f:S\rightarrow S$ is a homeomorphism, $f(a)$ and $f^2(a)$ are also disjoint. We see that $f(a)$ is disjoint from both $a$ and $f^2(a)$. 

We need to show that if $\gamma\subset S$ is a geodesic disjoint from both $a$ and $f^2(a)$, then $\gamma=f(a)$. For this purpose, we notice that $t_{\gamma}$ commutes with $t_a^r\circ t_{f^2(a)}^{-s}=t_a^r\circ \left(f^2\circ t_a^{-s}\circ f^{-2}\right)$. By the Bers isomorphism, $\tau_{\hat{\gamma}}$ commutes with $\tau_{\hat{a}}^r\left(g^2\tau_{\hat{a}}^{-s}g^{-2}\right)$. By Lemma 4.1 of \cite{CZ2}, $\tau_{\hat{a}}^r\left(g^2\tau_{\hat{a}}^{-s}g^{-2}\right)$ sends every maximal element $W$ of $\mathscr{U}_{\hat{\gamma}}$ to a maximal element. 

First we claim that $g(U_0)=\mathbf{D}\cup U$. See Figure 1 (a). Suppose $g(U_0)\cap U\neq \emptyset$. Then $g(U_0)\cup U=\mathbf{D}$ and 
$\varepsilon_2>T_g$.  By the same argument of Lemma \ref{L3}, $\left[\tau_{\hat{a}}^r\left(g^2\tau_{\hat{a}}^{-s}g^{-2}\right)\right]^*$ are pseudo-Anosov. This contradicts that $\tau_{\hat{a}}^r\left(g^2\tau_{\hat{a}}^{-s}g^{-2}\right)$ sends every maximal element $W\in \mathscr{U}_{\hat{\gamma}}$ to a maximal element.

Next we claim that there exists a maximal element $W\in \mathscr{U}_{\hat{\gamma}}$ such that 
\begin{equation}\label{U1}
W=g(U_0).
\end{equation}
 Since $\tilde{c}$ is a filling geodesic, if there is no maximal element $W$ with $\partial W\cap \hat{c}\neq \emptyset$, there is a maximal element $W_0\in \mathscr{U}_{\hat{\gamma}}$ such that $\hat{c}\subset W_0$. Since $\partial W_0=\partial W_0'$ does bot intersect $\hat{a}$, $\hat{b}_k$ and $\hat{c}$, we see that $W_0'\cap \mathbf{S}^1$ lies in one of the six arcs of $\mathbf{S}^1\backslash \{A,B,E,F,X,Y\}$ (see Figure 1 (b)). The same argument of Lemma \ref{L3} yields that this is impossible. 

Consider the case when there is a maximal element $W\in \mathscr{U}_{\hat{\gamma}}$  with $\partial W\cap \hat{c}\neq \emptyset$. Notice that $\varepsilon_1=0$
and $\varepsilon_2=T_g$. If $W\neq g(U_0)$, the same argument of Lemma 3.1 can be used to assert that there is a maximal element $W_0\in  \mathscr{U}_{\hat{\gamma}}$ such that either $W_0'\subset U$ or $W_0'\subset U_2$. Then we can use the argument of Lemma 3.2 to deduce that $\tau_{\hat{a}}^r\tau_{2}^{-s}(W_0)$ is not a maximal element of $\mathscr{U}_{\hat{\gamma}}$. This contradicts that $\tau_{\hat{a}}^r\left(g^2\tau_{\hat{a}}^{-s}g^{-2}\right)$ sends every maximal element $W\in \mathscr{U}_{\hat{\gamma}}$ to a maximal element. 

We conclude that (\ref{U1}) holds. Since $U_0$ is a maximal element of $\mathscr{U}_{\hat{a}}$, (\ref{U1}) tells us that $\tau_{\hat{\gamma}}=g\tau_{\hat{a}}g^{-1}$. With the help of the Bers isomorphism, we obtain $t_{\gamma}=f\circ t_a\circ f^{-1}=t_{f(a)}$. That is, $\gamma=f(a)$. This completes the proof of Theorem 1.2. \ \ \ \ \ \ \ \ \ \ \ \ \ \ \ \ \ \ \ \ \ \ \ \ \ \ \ \ \ \ \ \ \ \ \ \ \ \ \ \ \   $\Box$

\medskip

\noindent {\em Proof of Corollary $1.1$: } Suppose that $(a,f^{k_0}(a))$ does not fill $S$ for some integer $k_0\geq 2$. This means that there is a simple closed geodesic $b\subset S$ such that $b$ is disjoint from both $a$ and $f^{k_0}(a)$.  By Theorem 1.2, $b=f(a)$. This implies that $a$ and $f(a)$ are disjoint, which leads to a contradiction. \ \ \ \ \ \ \ \ \ \ \ \ \ \ \ \ \ \ \ \ \ \ \ \ \ \ \ \ \ \ \ \ \ \ \ \ \ \ \ \ \ \ \ \ \ \ \ \ \ \ \ \ \ \ \ \ \ \ \ \ \ \ \ \ \ \ \ \ \ \ \ \ \ \ \ \ \ \ \ \ \ \ \ \ \ \ \ \   $\Box$ 
 
\section{Remarks}
\setcounter{equation}{0}

For any simple closed geodesic $a\subset S$, if $\tilde{a}\subset \tilde{S}$ is trivial, then Theorem 1.1 of \cite{CZ3} states that $(a,f(a))$ fills $S$ for all elements $f\in \mathscr{F}$. We know that the set $\mathscr{F}$ is a disjoint union of $\mathscr{F}_{\tilde{c}}$, where every $\mathscr{F}_{\tilde{c}}$ consists of pseudo-Anosov maps conjugate (in the fundamental group $\pi_1(\tilde{S},x)$) to a map $f_{\tilde{c}}$ that can be obtained from a filling closed geodesic $\tilde{c}\subset \tilde{S}$. From the argument of Theorem 1.1, for every filling closed geodesic $\tilde{c}$, most elements $f$ in $\mathscr{F}_{\tilde{c}}$ have the property that $(a,f(a))$ fill $S$. However, there are some exceptions. 

From Theorem 1.2, some filling closed geodesics $\tilde{c}\subset \tilde{S}$ (and thus elements $f\in \mathscr{F}_{\tilde{c}}$) can be identified so that $(a,f^2(a))$ does not fill $S$ (while Theorem 1.1 guarantees that $(a,f^3(a))$ always fills $S$). A question asks whether some filling geodesic $\tilde{c}$ (and thus elements $f\in \mathscr{F}_{\tilde{c}}$) can be identified so that $(a,f(a))$ does not fill $S$ but $(a,f^2(a))$ does. If this occurs, then $\tilde{c}$ must intersect $\tilde{a}$ in a complicated way.    

\medskip

\noindent {\bf Acknowledgment. } I am grateful to the referees for pointing out to me some recent developments in the subject, and for their helpful comments and thoughtful suggestions.

\bigskip

\end{document}